\date{\today}
\newtheorem{theorem}{Theorem}[section]
\theoremstyle{definition}
\newtheorem{definition}[theorem]{Definition}
\newtheorem{example}[theorem]{Example}
\newtheorem{examples}[theorem]{Examples}
\newcommand{\ot}{\otimes}
\newcommand{\co}{\circ}
\let\uml\"
\title[Factorizations of Hopf quasigroups]{Factorizations of Hopf quasigroups}
\title{Factorizations of Hopf quasigroups} 
\begin{document}

\maketitle

\begin{center}
{\bf Ram\'on
Gonz\'{a}lez Rodr\'{\i}guez$^{a,b}$}.
\end{center}

\vspace{0.1cm}
\begin{center}
{\small \vspace{0.4cm}  [https://orcid.org/0000-0003-3061-6685]}
\end{center}
\begin{center}	{\small $^{a}$ CITMAga, 15782 Santiago de Compostela, Spain}
\end{center}
\begin{center}
{\small $^{b}$ Departamento de Matem\'{a}tica Aplicada II, Universidade de Vigo,  E-36310 
Vigo, Spain\\email: rgon@dma.uvigo.es}
\end{center}

\vspace{0.1cm}

\begin{abstract}  
In this paper  we introduce the notion of factorization in the Hopf quasigroup setting and we prove that, if $A$ and $H$ are Hopf quasigroups such that their antipodes are isomorphisms, a Hopf quasigroup $X$ admits a factorization as $X=AH$ iff $X$ is isomorphic  to a double cross product $A\bowtie H$ as Hopf quasigroups.
\end{abstract} 

\vspace{0.2cm} 

{\footnotesize {\sc Keywords}:  Hopf (co)quasigroup, factorization, double cross product, distributive law.
}

{\footnotesize {\sc 2020 Mathematics Subject Classification}:  18M05, 16T99, 20N05. 
}

\section{Introduction}  
Let ${\mathbb F}$ be a field and denote by $\otimes$ the tensor product in the category of vector spaces over ${\mathbb F}$ denoted by ${\mathbb F}$-{\sf Vect}. The double cross product of two Hopf algebras $A$ and $H$ in ${\mathbb F}$-{\sf Vect} was introduced by Majid in \cite[Proposition 3.12]{MAJ} (see also \cite[Theorem 7.2.2]{MAJDCP}) as a new Hopf algebra structure defined in the tensor product $A\otimes H$ and determined by a matched pair $(A,H)$. A matched pair  of Hopf algebras is a system $(A,H)$, where $A$ and $H$ are Hopf algebras, $A$ is a left $H$-module coalgebra with action $\varphi_{A}:H\otimes A\rightarrow A$, $H$ is a right $A$-module coalgebra with action $\phi_{H}:H\otimes A\rightarrow H$ and some suitable compatibility conditions hold for all $h,g\in H$ and $a,b\in A$. Using the Heyneman-Sweedler's convention and the notations $\varphi_{A}(h\otimes a)=h \triangleright a$, $\phi_{H}(h\otimes a)=h\triangleleft a$, these conditions can be written as follows:
$$h\triangleright 1_{A}=\varepsilon(h)1_{A}, \;\; h\triangleright (ab)=(h_{(1)}\triangleright a_{(1)})((h_{(2)} \triangleleft a_{(2)})\triangleright b),$$
$$1_{H}\triangleleft a=\varepsilon(a)1_{H}, \;\; (hg)\triangleleft a =(h \triangleleft  (g_{(1)} \triangleright a_{(1)})) (g_{(2)}\triangleleft a_{(2)}), $$
$$h_{(1)}\triangleleft a_{(1)}\otimes h_{(2)}\triangleright a_{(2)}=h_{(2)}\triangleleft a_{(2)}\otimes h_{(1)}\triangleright a_{(1)}.$$

If $(A,H)$ is a matched pair  of Hopf algebras, the double cross product $A\bowtie H$ of $A$ with $H$ is the Hopf algebra built on the vector space $A\otimes H$ with product 
$$(a\otimes h)(b\otimes g)=a(h_{(1)} \triangleright b_{(1)})\otimes (h_{(2)} \triangleleft b_{(2)})g$$
and tensor product unit, counit, coproduct and antipode 
$$\lambda_{A\bowtie H}(a\otimes h)=\lambda_{H}(h_{(2)})\triangleright \lambda_{A}(a_{(2)})\otimes \lambda_{H}(h_{(1)})\triangleleft \lambda_{A}(a_{(1)}).$$
where $\lambda_{H}$ is the antipode of $H$ and $\lambda_{A}$ is the antipode of $A$.

Following \cite{MAJDCP}, a Hopf algebra $X$ factorises as $X=AH$ if there exists sub-Hopf algebras $A$ and $H$ with inclusion maps $i_{A}$ and $i_{H}$ such that the map $\omega (a\otimes h)=i_{A}(a)i_{H}(h)$ is an isomorphism of vector spaces. As was proved by Majid in \cite[Theorem 7.2.3]{MAJDCP}, $X$ factorises as $X=AH$ iff there exists a matched pair of Hopf algebras $(A,H)$ such that $X$ is isomorphic to $A\bowtie H$ as Hopf algebras. 

On the other hand, the theory of distributive laws between monads was initiated by Beck \cite{Beck} and Barr \cite{Barr} in the seventies of the last century. A distributive law between two algebras $A$ and $H$ is a morphism $\Psi:H\ot A\rightarrow A\otimes H$ which is compatible with the algebra structures. It is well-known that a distributive law $\Psi:H\ot A\rightarrow A\otimes H$ induces  an algebra structure on the tensor product $A\otimes H$  commuting with  the action associated to the product of $A$ on the left and with  the action associated to the product of $H$ on the right. This algebra, denoted by  $A\ot_{\Psi}H$, is the wreath product of $A$ and $H$ and its unit and product are defined by $1_{A\ot_{\Psi}H}=1_{A}\ot 1_{H}$ and $$\mu_{A\ot_{\Psi}H}=(\mu_{A}\ot \mu_{H})\co (id_{A}\ot \Psi\ot id_{H}),$$ where  $\mu_{A}$, $\mu_{H}$ are the corresponding products and $id_{A}$, $id_{H}$ are the identity morphisms for $A$ and $H$ respectively. It is well known that  bialgebras are algebras in the category of coalgebras. From this point of view, a distributive law in the category of bialgebras is a distributive law between the underlying algebras satisfying that  is a coalgebra morphism. This kind of  distributive laws induce a wreath product bialgebra, where the product is the wreath product and the colagebra structure is the one associated to the tensor product coalgebra. A relevant example of these wreath products are the double crossed product quoted in the previous page where the distributuve law is $$\Psi(h\otimes a)=h_{(1)} \triangleright b_{(1)}\otimes h_{(2)} \triangleleft b_{(2)}.$$

In the literature we can find similar constructions of double cross products. For example, in the associative case are relevant the double cross products associated to  matched pairs of groups \cite{T} (i.e. Hopf algebras in the category  of sets) and the double cross products associated to  matched pairs of groupoids \cite{Mar}. In \cite{our2} an extension of this kind of products was presented in a non-associative setting for matched pairs of Hopf quasigroups  as a generalization of the results proved in \cite{MajidCMUC} for quasigroups. The notion of Hopf quasigroup in ${\mathbb F}$-{\sf Vect} was introduced  by Klim and Majid in \cite{Majidesfera} and it is a particular case  of unital coassociative $H$-bialgebra (see \cite{PI07}) and also of a quantum quasigroup (see \cite{SM1} and \cite{SM2}). This non-associative generalization of Hopf algebras include as a particular cases the loop algebra for a loop $L$ with the inverse property (see \cite{Majidesfera}, \cite{Bruck}) and also  the enveloping algebra $U(M)$ of a Malcev algebra over ${\mathbb F}$ (see \cite{Majidesfera} and \cite{PIS}). Several articles have been published in recent years devoted to  the study  of some kind of products between Hopf quasigroups as for example: \cite{BrzezJiao1} and \cite{BrzezJiao2} for the theory of smash products of Hopf quasigroups; \cite{FT1} and  \cite{FT} for the theory of  twisted smash products of Hopf quasigroups; \cite{FT} for Hopf quasigroups obtained by the twist double method; \cite{our2}  for Hopf quasigroups associated to skew pairings  and Hopf quasigroups obtained as double cross products of Hopf quasigroups. As was pointed in \cite{RGR}, in all these cases the product is determined by  a morphism $\Psi$  satisfying some conditions that are close to the ones involved in the classical definition of distributive law. Taking this into consideration, in \cite{RGR} the author introduce a  notion of "non-associative" distributive law, called $a$-comonoidal distributive law,   that permits to understand the products cited in the previous lines  with a general point of view. In the final section of \cite{RGR} we can find the proof of the following fact: For two Hopf quasigropups $A$ and $H$, the tensor product $A\ot H$ with the corresponding  wreath product, i.e., the  product associated to an $a$-comonoidal distributive law, becomes a  Hopf quasigroup, where the coalgebra structure is the one of the tensor product coalgebra. 

Taking into account what was said in the previous paragraphs, it is natural to ask when a Hopf quasigroup $X$ admits a factorization. To answer this question is the main motivation of this paper. 

The structure of the paper is as follows. In Section 2 we recall some necessary background about Hopf (co)quasigroups in a monoidal setting and in Section 3 we present the main facts of the theory of wreath products associated to an $a$-comonoidal distributive law. In Section 4 we introduce the notion of factorization for Hopf quasigrous and we prove the main theorem of this paper that asserts the following: Let $H$, $A$, $X$ be Hopf quasigroups such that the antipodes of $H$ and $A$ are isomorphisms. Then, $X$ factorizes as $X=AH$ iff there exists a matched pair of Hopf quasigroups $(A,H)$ such that $X$ is isomorphic to the double cross product $A\bowtie H$ as Hopf quasigroups. Also, in this last section, we discuss an example of a non-commutative, non-cocommutative Hopf quasigroup constructed as a double cross product.  Finally, by dualisation, in this paper we show that we can obtain similar results for Hopf coquasigroups.

\section{Preliminaries } 

From now on {\sf C} denotes a strict symmetric monoidal category with tensor product $\ot$, unit object $K$ and natural isomorphism of symmetry $c$. Recall that  a monoidal category is a category ${\sf C}$ equipped with a tensor product functor $\ot :{\sf C}\times {\sf C}\rightarrow {\sf C}$,  a unit object  $K$ of ${\sf C}$ and  a family of natural isomorphisms $a_{M,N,P}:(M\ot N)\ot P\rightarrow M\ot (N\ot P),$ $r_{M}:M\ot K\rightarrow M$,  $l_{M}:K\ot M\rightarrow M,$ in ${\sf C}$ (called  associativity, right unit and left unit constraints, respectively) satisfying the Pentagon Axiom and the Triangle Axiom, i.e.,
$$a_{M,N, P\ot Q}\co a_{M\ot N,P,Q}= (id_{M}\ot a_{N,P,Q})\co a_{M,N\ot P,Q}\co (a_{M,N,P}\ot id_{Q}),$$
$$(id_{M}\ot l_{N})\co a_{M,K,N}=r_{M}\ot id_{N},$$
where $id_{X}$ denotes the identity morphism for each object $X$ in ${\sf C}$. A monoidal category is called strict if the associativity, right unit and left unit constraints are identities.  A strict monoidal category ${\sf C}$ is symmetric  if it has  a  family of natural isomorphisms $c_{M,N}:M\ot N\rightarrow N\ot M$ such that the equalities
$$
c_{M,N\ot P}= (id_{N}\ot c_{M,P})\co (c_{M,N}\ot id_{P}),\;\;
c_{M\ot N, P}= (t_{M,P}\ot id_{N})\co (id_{M}\ot c_{N,P}),\;\; 
$$
$$c_{N,M}\co c_{M,N}=id_{M\ot N},$$
hold for all $M$, $N$ in ${\sf C}$.

Considering that it is well known that every non-strict monoidal category is monoidal equivalent to a strict one, we can assume without loss of generality that the category {\sf C} is strict and then we omit  explicitly  the associativity and unit constraints. Thus, the  results proved  in this paper for objects and morphisms in {\sf C} remain valid for every non-strict symmetric monoidal category, what would include for example the category ${\mathbb F}$-{\sf Vect} of vector spaces over a field ${\mathbb F}$,  the category $R$-{\sf Mod} of left modules  over a commutative ring $R$, or the category {\sf Set} of sets.  In what follows, for simplicity of notation, given objects $M$, $N$, $P$ in ${\mathcal
	C}$ and a morphism $f:M\rightarrow N$, we write $P\ot f$ for
$id_{P}\ot f$ and $f \ot P$ for $f\ot id_{P}$.

A magma in ${\sf C}$ is a pair $A=(A,  
\mu_{A})$, where  $A$ is an object in ${\sf C}$ and $\mu_{A}:A\otimes A\rightarrow A$ (product) is a morphism in ${\sf C}$.
A unital magma  in ${\sf C}$ is a triple $A=(A, \eta_A, 
\mu_{A})$, where $(A,  
\mu_{A})$ is a magma in ${\sf C}$ and
$\eta_{A}: K\rightarrow A$ (unit) is a morphism in ${\sf C}$ such that 
$\mu_{A}\circ (A\otimes \eta_{A})=id_{A}=\mu_{A}\circ
(\eta_{A}\otimes A)$. A monoid in ${\sf C}$ is a unital magma $A=(A, \eta_A, \mu_{A})$ in ${\sf C}$ satisfying   $\mu_{A}\circ (A\otimes
\mu_{A})=\mu_{A}\circ (\mu_{A}\otimes A)$, i.e., the product $\mu_{A}$ is associative. Given two unital magmas (monoids) $A$ and $B$, a morphism $f:A\rightarrow B$ in ${\sf C}$ is called a morphism of unital magmas (monoids) if  $f\circ \eta_{A}= \eta_{B}$ (i.e., the morphism $f$ is unitary) and $\mu_{B}\circ (f\otimes f)=f\circ \mu_{A}$ (i.e., the morphism $f$ is multiplicative). 

Also,
if $A$, $B$ are unital magmas (monoids) in ${\sf C}$, the object $A\otimes
B$ is a unital magma (monoid) in
${\sf C}$, where $\eta_{A\otimes B}=\eta_{A}\otimes \eta_{B}$
and $\mu_{A\otimes B}=(\mu_{A}\otimes \mu_{B})\circ (A\otimes c_{B,A}\otimes B)$. If $A=(A, \eta_A,  \mu_{A})$ is a unital magma so is $A^{op}=(A, \eta_A,  \mu_{A}\co c_{A,A})$.

A comagma in ${\sf C}$ is a pair ${D} = (D, \delta_{D})$, where $D$ is an object in ${\sf C}$ and $\delta_{D}:D\rightarrow D\otimes D$ (coproduct) is a morphism in
${\sf C}$. A counital comagma in ${\sf C}$ is a triple ${D} = (D,
\varepsilon_{D}, \delta_{D})$, where $(D, \delta_{D})$ is a comagma in ${\mathcal
	C}$ and $\varepsilon_{D}: D\rightarrow K$ (counit) is a  morphism in
${\sf C}$ such that $(\varepsilon_{D}\otimes D)\circ
\delta_{D}= id_{D}=(D\otimes \varepsilon_{D})\circ \delta_{D}$.  A comonoid  in ${\sf C}$ is a counital comagma in ${\sf C}$ satisfying  $(\delta_{D}\otimes D)\circ \delta_{D}= (D\otimes \delta_{D})\circ \delta_{D}$, i.e., the coproduct $\delta_{D}$ is coassociative. If $D$ and
$E$ are counital comagmas (comonoids) in  ${\sf C}$, a morphism 
$f:D\rightarrow E$ in {\sf C} is called a  morphism of counital comagmas (comonoids) if $\varepsilon_{E}\circ f
=\varepsilon_{D}$ (i.e., the morphism $f$ is counitary),  and $(f\otimes f)\circ
\delta_{D} =\delta_{E}\circ f$ (i.e., the morphism $f$ is comultiplicative). .  

Moreover, if $D$, $E$ are counital comagmas (comonoids) in ${\sf C}$,
the object $D\otimes E$ is a counital comagma (comonoid) in ${\sf C}$, where
$\varepsilon_{D\otimes E}=\varepsilon_{D}\otimes \varepsilon_{E}$
and $\delta_{D\otimes E}=(D\otimes c_{D,E}\otimes E)\circ(
\delta_{D}\otimes  \delta_{E})$. If $D=(D, \varepsilon_{D}, \delta_{D})$ is a counital comagma so is $D^{cop}=(D, \varepsilon_{D}, c_{D,D}\co\delta_{D})$.

Let $f:D\rightarrow A$ and $g:D\rightarrow A$ be morphisms between a comagma $D$ and a magma $A$. We define the convolution product of $f$ and $g$ by $f\ast g=\mu_{A}\circ
(f\otimes g)\circ \delta_{D}$. If $A$ is unital and $D$ counital, we will say that $f$ is convolution invertible if there exists $f^{-1}:D\to A$ such that $f \ast f^{-1}=f^{-1}\ast f=\varepsilon_D\ot \eta_A$.

\begin{definition}
\label{nbimod}
{\rm A non-associative bimonoid in the category $\sf C$ is a unital magma $(H,\eta_H,\mu_H)$ and a comonoid $(H,\varepsilon_H,\delta_H)$ such that $\varepsilon_H$ and $\delta_H$ are morphisms of unital magmas (equivalently, $\eta_H$ and $\mu_H$ are morphisms of counital comagmas). Then the following identities hold:
\begin{equation}
\label{eta-eps}
\varepsilon_{H}\co \eta_{H}=id_{K},
\end{equation}
\begin{equation}
\label{mu-eps}
\varepsilon_{H}\co \mu_{H}=\varepsilon_{H}\ot \varepsilon_{H},
\end{equation}
\begin{equation}
\label{delta-eta}
\delta_{H}\co \eta_{H}=\eta_{H}\ot \eta_{H},
\end{equation}
\begin{equation}
\label{delta-mu}
\delta_{H}\co \mu_{H}=(\mu_{H}\ot \mu_{H})\co \delta_{H\ot H}.
\end{equation}
		
A non-associative bimonoid is called commutative if $\mu_{H}=\mu_{H}\circ c_{H,H}$, i.e., $H=H^{op}$ as unital magmas. Also, is called cocommutative if $\delta_{H}=c_{H,H}\co \delta_{H}$, i.e., $H=H^{cop}$ as comonoids.}
\end{definition}

\begin{definition}
\label{nbimod}
{\rm A non-coassociative bimonoid in the category $\sf C$ is a monoid $(D,\eta_D,\mu_D)$ and a counital comagma $(D,\varepsilon_D,\delta_D)$ such that $\eta_D$ and $\mu_D$ are morphisms of unital comagmas (equivalently, $\varepsilon_D$ and $\delta_D$ are morphisms of unital magmas). Then, as in the previous definition,  the  identities (\ref{eta-eps}), (\ref{mu-eps}), (\ref{delta-eta}) and (\ref{delta-mu}) hold.
		
A non-coassociative bimonoid is called  cocommutative if $\delta_{D}=c_{D,D}\co \delta_{D}$, i.e., $D=D^{cop}$ as counital comagmas. Also, is called commutative if $\mu_{D}=\mu_{D}\circ c_{D,D}$, i.e., $D=D^{op}$ as monoids. 
}
\end{definition}

Now we recall the notions of  Hopf quasigroup and Hopf coquasigroup in the category ${\sf C}$.

\begin{definition}
\label{Hopfqg} {\rm A Hopf quasigroup $H$ in
${\sf C}$ is a non-associative bimonoid such that there exists a morphism $\lambda_{H}:H\rightarrow H$ in ${\sf C}$ (called the  antipode of $H$) satisfying
\begin{equation}
\label{lH}
\mu_H\circ (\lambda_H\ot \mu_H)\circ (\delta_H\ot H)=\varepsilon_H\ot H=
\mu_H\circ (H\ot \mu_H)\circ (H\ot \lambda_H\ot H)\circ (\delta_H\ot H)
\end{equation}
and
\begin{equation}
\label{rH}
\mu_H\circ (\mu_H\ot H)\circ (H\ot \lambda_H\ot H)\circ (H\ot \delta_H)=
H\ot \varepsilon_H=
\mu_H\circ(\mu_H\ot \lambda_H)\circ (H\ot \delta_H). 
\end{equation}
		
Note that composing with $H\ot \eta_{H}$ in (\ref{lH}) we obtain that 
\begin{equation}
\label{1-conv}
\lambda_{H}\ast id_{H}=\varepsilon_H\ot \eta_H,  
\end{equation}
and composing with $ \eta_{H}\ot H$ in (\ref{rH}) we obtain 
\begin{equation}
\label{2-conv}
id_{H}\ast \lambda_H=\varepsilon_H\ot \eta_H. 
\end{equation}
		
Therefore, $\lambda_{H}$ is convolution invertible and $\lambda_{H}^{-1}=id_{H}$.

Definition \ref{Hopfqg}  is the monoidal version of the notion of Hopf quasigroup (also called non-associative Hopf algebra with the inverse property, or non-associative IP Hopf algebra) introduced in \cite{Majidesfera} (in this case {\sf C}=${\mathbb F}$-{\sf Vect}). Note that a  Hopf quasigroup $H$ is associative  if and only if  it is a Hopf algebra. 

If $H$ is a Hopf quasigroup in ${\sf C}$  we know that the antipode
$\lambda_{H}$ is unique, antimultiplicative, anticomultiplicative, i.e.,
\begin{equation} 
\label{antimu}
\lambda_{H}\co\mu_{H}=\mu_{H}\circ
(\lambda_{H}\ot\lambda_{H})\circ
c_{H,H},
\end{equation}
\begin{equation} 
\label{anticm}
\delta_{H}\co \lambda_{H}=
(\lambda_{H}\ot\lambda_{H})\circ
c_{H,H}\co \delta_{H}
\end{equation}
and leaves the unit and the counit invariable (see \cite{LP}):
\begin{equation} 
\label{inv}
\lambda_{H}\co\eta_{H}=\eta_{H},\;\;\;\varepsilon_{H}\co \lambda_{H}=\varepsilon_{H}. 
\end{equation}

Note that, by \cite[Proposition 4.3]{Majidesfera}, if $H$ is a commutative or cocommutative  Hopf quasigroup, we have that $\lambda_{H}^2=id_{H}$. Therefore, under (co)commutativity conditions, the  antipode of $H$ is an isomorphism.

A morphism between Hopf quasigroups $H$ and $A$ is a morphism $f:H\rightarrow A$ of unital magmas and comonoids. Then (see Lemma 1.4 of \cite{our1}) the equality
\begin{equation}
\label{antipode-morphism}
\lambda_A\co f=f\co \lambda_H
\end{equation}
holds.
		
}
\end{definition}

\begin{examples}
\label{exaHq}
{\rm  
A quasigroup is a set $Q$ together with a product such that for
any two elements $u, v \in Q$ the equations $u  x = v$, $x u = v$ and $u  v =
x$ have  unique solutions in $Q$. A quasigroup $L$ which contains an
element $e_{L}$ such that $ue_{L} = u = e_{L} u $ for every $u \in
L$ is called a loop. A loop $L$ is said to be a loop with the
inverse property  (for brevity an I.P. loop) if 
to every element $u\in L$, there corresponds an element $u^{-1}\in
L$ such that the equations $u^{-1}(uv)=v=(vu)u^{-1}$
hold for  every $v\in L$.
	
If $L$ is an I.P. loop, it is easy to show (see \cite{Bruck}) that
for all $u\in L$ the element $u^{-1}$ is unique and $
u^{-1}u=e_{L}=uu^{-1}.$
Moreover, the mapping $u\rightarrow u^{-1}$ is an anti-automorphism
of the I.P. loop $L$: $
(uv)^{-1}=v^{-1}u^{-1}.$ Then, $L$ is an I.P. loop iff $L$ is a cocommutative Hopf quasigroup in {\sf Set}. A concrete examples of these objects is the set of invertible elements of the octonions ${\mathbb O}$, the sphere $S^7$  (or in more general way the spheres  $S^{2^n-1}$) and the 16 Moufang loop ${\mathcal G}_{\mathbb O}$ associated to the octonions  (see \cite[Section 2]{Majidesfera}).

Let $R$ be a commutative ring and $L$ an I.P. loop. Then, by
\cite[Proposition 4.7]{Majidesfera}, we know that
$$RL=\bigoplus_{u\in L}Ru$$
is a cocommutative Hopf quasigroup in $R$-{\sf Mod} with  product defined by the
linear extension of the one defined in $L$ and
$\delta_{RL}(u)=u\ot u, \; \varepsilon_{RL}(u)=1_{R}, \;
\lambda_{RL}(u)=u^{-1}$ on the basis elements.

On the other hand, consider a commutative ring  $R$ with $\frac{1}{2}$ and $\frac{1}{3}$ in $R$. A Malcev algebra  $(M,[\;,\;])$ over $R$ is a free module over $R$ with a bilinear anticommutative
operation [\;,\;] on $M$ satisfying that:
\[[J(a, b, c), a] = J(a, b, [a, c]),\]
where $J(a, b, c) = [[a, b], c] - [[a, c], b] - [a, [b, c]]$ is the Jacobian in $a, b, c$ (see \cite{PIS}). By the construction given in \cite{PIS}, there exists a cocommutative Hopf quasigroup structure in $R$-{\sf Mod} associated to $M$. Indeed, consider the not necessarily associative algebra $U(M)$ defined as the quotient of $R\{M\}$, the free non-associative algebra on a basis of $M$, by the ideal $I(M)$ generated by the set
\[
\{ab-ba-[a,b], (a,x,y)+(x,a,y), (x,a,y)+(x,y,a) \;/\;   a, b\in M, x,y \in R\{M\} \},
\]
where $(x,y,z)=(xy)z-x(yz)$ is the usual additive associator. By  \cite[Proposition 4.1]{PIS} and \cite[Proposition 4.8]{Majidesfera}, the diagonal map $\delta_{U(M)}:U(M)\to U(M)\ot U(M)$ defined by $\delta_{U(M)} (x)=1\ot x +x\ot 1$  for all $x\in M,$ and the map $\varepsilon_{U(M)}:U(M)\to R$ defined by   $\varepsilon_{U(M)}(x)=0$   for all $x\in M$, both  extended to $U(M)$ as algebra morphisms; together with the map  $\lambda_{U(M)}:U(M)\to U(M)$, defined by $\lambda_{U(M)}(x)=-x$ for all $x\in M$ and extended to $U(M)$ as an antialgebra morphism, provide a cocommutative Hopf quasigroup structure on $U(M)$.

}
\end{examples}

\begin{definition}
\label{Hopfcqg} {\rm A Hopf coquasigroup $D$ in
${\sf C}$ is a non-coassociative bimonoid such that there exists a morphism $\lambda_{D}:D\rightarrow D$ in ${\sf C}$ (called the  antipode of $D$) satisfying
\begin{equation}
\label{clH}
(D\otimes \mu_{D})\circ (\delta_{D}\otimes \lambda_{D})\circ \delta_{D}=D\otimes \eta_{D}=(D\otimes \mu_{D})\circ  (D\ot \lambda_D\ot D)\circ (\delta_{D}\otimes D)\circ \delta_{D}
\end{equation}
and
\begin{equation}
\label{crH}
(\mu_{D}\otimes D)\circ (\lambda_{D}\otimes \delta_{D})\circ \delta_{D}=\eta_{D}\otimes D=(\mu_{D}\otimes D)\circ (D\ot \lambda_D\ot D)\circ (D\ot \delta_{D})\circ \delta_{D}.
\end{equation}
		
Note that composing with $\varepsilon_{D}\ot D$ in (\ref{clH}) we obtain (\ref{2-conv}) and composing with $ D\ot \varepsilon_{D}$ in (\ref{crH}) we obtain  (\ref{1-conv}). Then, as in the quasigroup case, 
$\lambda_{D}$ is convolution invertible and $\lambda_{D}^{-1}=id_{D}$.
}
\end{definition}

It is obvious that, a  Hopf coquasigroup $D$ is coassociative, i.e., $D$ is a comomonoid,  if and only if $D$ is a Hopf algebra. Moreover, as in the Hopf quasigroup case, if $D$ is a Hopf coquasigroup, the antipode
$\lambda_{D}$ is unique, antimultiplicative, anticomultiplicative and leaves the unit and the counit invariant. In this setting a morphism between two Hopf coquasigroups $D$ and $B$ is a morphism $g:D\rightarrow B$ of monoids and counital comagmas. Therefore, \ref{antipode-morphism} holds, i.e., $\lambda_B\co g=g\co \lambda_D.$

\begin{example}
Let ${\mathbb F}$ be a field. By \cite{Majidesfera} the algebraic variety ${\mathbb F}[S^{7}]$ is an example of Hopf coquasigroup. Also, there is a natural action of ${\mathbb Z}_{2}^{n}$ on ${\mathbb F}[S^{7}]$ which leads to a cross coproduct ${\mathbb F}[S^{7}]\rtimes {\mathbb Z}_{2}^{n}$ as the first example of noncommutative noncocommutative  Hopf coquasigroup (see \cite[Proposition 5.10, Example 5.11]{Majidesfera}). 

\end{example}

On the other hand, we can obtain examples of Hopf coquasigroups as duals of finite Hopf quasigroups. Following \cite{SAGA}, in the next definition we recall the notion of finite  object in  the category ${\sf C}$.

\begin{definition}
\label{prof}
{\rm An object  $P$ in ${\sf C}$ is finite if there exists $P^{\ast}$ in ${\sf C}$, called the dual object of $P$, such that $(P\otimes -, P^{\ast}\otimes -, \alpha_{P}, \beta_{P})$ is an adjoint pair.
	
If $f:P\rightarrow Q$ is a morphism between finite objects, we define the dual morphism of $f$ as $f^{\ast}: Q^{\ast}\rightarrow P^{\ast}$ where $f^{\ast}=(P^{\ast}\otimes (\beta_{Q}(K)\circ (f\otimes Q^{\ast})))\circ (\alpha_{P}(K)\otimes Q^{\ast})$.

}
\end{definition}

If $P$ and $Q$ are finite  objects,  
$P\otimes Q$ is a finite  object where  $(P\otimes Q)^{\ast}=Q^{\ast}\otimes P^{\ast}$ because, if $(P\otimes -, P^{\ast}\otimes -, \alpha_{P}, \beta_{P})$ 
and $(Q\otimes -, Q^{\ast}\otimes -, \alpha_{Q}, \beta_{Q})$ are adjoint 
pairs, then $$(P\otimes Q\otimes -, Q^{\ast}\otimes P^{\ast}\otimes -, 
\alpha_{P\otimes Q}, \beta_{P\otimes Q})$$ with 
$$ \alpha_{P\otimes Q}=(Q^{\ast}\otimes \alpha_{P}(K)\otimes Q\otimes -)
\circ (\alpha_{Q}(K)\otimes -)$$
and
$$\beta_{P\otimes Q}=
(\beta_{Q}(K)\otimes -)\circ 
(P\otimes \beta_{Q}(K)\otimes P^{\ast}\otimes -),$$
is an adjoint pair. Also, for morphisms we have that $(f\otimes g)^{\ast}=g^{\ast}\otimes f^{\ast}$. On the other hand, if $P$ 
is a finite object with adjoint pair $(P\otimes -, P^{\ast}\otimes -, \alpha_{P}, \beta_{P})$,   $P^{\ast}$ is finite object where   $P^{\ast \ast}= P$ because  $(P^{\ast}\otimes -, P\otimes -, \alpha_{P^{\ast}}, 
\beta_{P^{\ast}})$ with $\alpha_{P^{\ast}}=(c_{P^{\ast},P}\otimes -) 
\circ \alpha_{P}$ and $\beta_{P^{\ast}}=\beta_{P}\circ (
c_{P^{\ast},P}\otimes -)$ is an adjoint pair.

Then, by the properties of quoted in the previous paragraph, if $H$ is a finite Hopf quasigroup, it is easy to prove that the dual object $H^{\ast}$ is a Hopf coquasigroup with $\eta_{H^{\ast}}=\varepsilon_{H}^{\ast}$, $\mu_{H^{\ast}}=\delta_{H}^{\ast}$, $\varepsilon_{H^{\ast}}=\eta_{H}^{\ast}$, 
$\delta_{H^{\ast}}=\mu_{H}^{\ast}$ and $\lambda_{H^{\ast}}=\lambda_{H}^{\ast}$  as antipode.  Therefore, if $H$ is finite object in ${\sf C}$, $H$ is a Hopf quasigroup iff $H^{\ast}$ is a Hopf coquasigroup. Similarly, $H$ is a Hopf  coquasigroup iff $H^{\ast}$ is a Hopf quasigroup.

Finally, by \cite[Corollary 1]{LP}, we know that, if $H$ is a finite Hopf (co)quasigroup,  the antipode of $H$ is an isomorphism.

\section{Wreath (co)products for Hopf (co)quasigroups}

In this section we recall the main notions and results introduced and proved in \cite{RGR} about the wreath product of Hopf quasigroups. Following \cite{RGR}, this kind of products are the ones associated to $a$-comonoidal distributive laws.

\begin{definition}
\label{dl} {\rm Le $H$, $A$ be Hopf quasigroups. A morphism $\Psi:H\ot A\rightarrow A\ot H$ is said to be a distributive law of $H$ over $A$ if the following identities
\begin{equation}
\label{dl1}
\Psi\co (H\ot \mu_{A})\co (\lambda_{H}\ot \lambda_{A}\ot A)=(\mu_{A}\ot H)\co (A\ot \Psi)\co (\Psi\ot A)\co (\lambda_{H}\ot \lambda_{A}\ot A),
\end{equation}
\begin{equation}
\label{dl2}
\Psi\co (\mu_{H}\ot A)\co (H\ot \lambda_{H}\ot \lambda_{A})=(A\ot \mu_{H})\co ( \Psi\ot H)\co (H\ot \Psi)\co (H\ot \lambda_{H}\ot \lambda_{A}),
\end{equation}
\begin{equation}
\label{dl3}
\Psi\co (H\ot \eta_{A})=\eta_{A}\ot H,
\end{equation}
\begin{equation}
\label{dl4}
\Psi\co (\eta_{H}\ot A)=A\ot \eta_{H}, 
\end{equation}
hold.
}
\end{definition}

If   the antipodes of $H$ and $A$ are isomorphisms (for example, if $H$ and $A$ are finite objects), the identities (\ref{dl1}) and (\ref{dl2}) are equivalent to 
\begin{equation}
\label{dl1-1}
\Psi\co (H\ot \mu_{A})=(\mu_{A}\ot H)\co (A\ot \Psi)\co (\Psi\ot A),
\end{equation}
\begin{equation}
\label{dl2-1}
\Psi\co (\mu_{H}\ot A)=(A\ot \mu_{H})\co ( \Psi\ot H)\co (H\ot \Psi),
\end{equation}
respectively. Then, in this case, the conditions of the definition of distributive law for Hopf quasigroups are the ones that we can find in the classical definition of distributive law between monoids, i.e., $\Psi$ is compatible with the unit and the product of $A$ and $H$. 

\begin{definition}
\label{cdl} {\rm Le $H$, $A$ be Hopf quasigroups and let  $\Psi:H\ot A\rightarrow A\ot H$  be a distributive law of $H$ over $A$. The distributive law $\Psi$ is said to be comonoidal if it is a comonoid morphism, i.e., the following identities
\begin{equation}
\label{cdl1}
\delta_{A\ot H}\co \Psi=(\Psi \ot \Psi)\co \delta_{H\ot A},
\end{equation}
\begin{equation}
\label{cdl2}
(\varepsilon_{A}\ot \varepsilon_{H})\co\Psi=\varepsilon_{H}\ot \varepsilon_{A}, 
\end{equation}
hold.
}
\end{definition}

\begin{definition}
\label{cdl} {\rm Le $H$, $A$ be Hopf quasigroups and let  $\Psi:H\ot A\rightarrow A\ot H$  be a comonoidal distributive law of $H$ over $A$. We will say that $\Psi$ is  an $a$-comonoidal distributive law of $H$ over $A$  if the following identities
\begin{equation}
\label{adl1}
(A\ot \mu_{H})\co (\Psi\ot \mu_{H})\co (H\ot \Psi\ot H)\co (((\lambda_{H}\ot H)\co \delta_{H})\ot A\ot H)=\varepsilon_{H}\ot A\ot H,
\end{equation}
\begin{equation}
\label{adl2}
(A\ot \mu_{H})\co (\Psi\ot \mu_{H})\co (H\ot \Psi\ot H)\co (((H\ot \lambda_{H})\co \delta_{H})\ot A\ot H)=\varepsilon_{H}\ot A\ot H,
\end{equation}
\begin{equation}
\label{adl3}
(\mu_{A}\ot H)\co (\mu_{A}\ot \Psi)\co (A\ot \Psi\ot A)\co (A\ot H\ot (( \lambda_{A}\ot A)\co \delta_{A}))=A\ot H\ot \varepsilon_{A}, 
\end{equation}
\begin{equation}
\label{adl4}
(\mu_{A}\ot H)\co (\mu_{A}\ot \Psi)\co (A\ot \Psi\ot A)\co (A\ot H\ot (( A\ot \lambda_{A})\co \delta_{A}))=A\ot H\ot \varepsilon_{A},
\end{equation} 
hold.
}
\end{definition}

In \cite[Theorem 3.1]{RGR}  the author prove that,  for an $a$-comonoidal distributive law $\Psi$ of $H$ over $A$,   the wreath product $A\ot_{\Psi} H$ built on $A\ot H$ with  the  wreath product magma
\begin{equation}
\label{wpa1}
\mu_{A\ot_{\Psi}H}= (\mu_{A}\ot \mu_{H})\co (A\ot \Psi\ot H)
\end{equation} 
unit $\eta_{A\ot_{\Psi}H}=\eta_{A}\otimes \eta_{H}$, counit $\varepsilon_{A\ot_{\Psi}H}=\varepsilon_{A\otimes H}$, coproduct $\delta_{A\ot_{\Psi}H}=\delta_{A\otimes H}$ and antipode  
\begin{equation}
\label{64}
\lambda_{A\ot_{\Psi} H}=\Psi\co (\lambda_{A}\ot \lambda_{H})\co c_{H,A},
\end{equation}
is a Hopf quasigroup. 

\begin{example}
{\rm As was proved in \cite{RGR}, the Hopf quasigroups defined by the twisted double method in \cite{FT} are examples of wreath product Hopf quasigroups. Also, the smash products of Hopf quasigroups in the sense of \cite{BrzezJiao1} are examples of wreath product Hopf quasigroups as well as the twisted smash products defined in \cite{FT1}. 
}
\end{example}

\begin{example}
\label{1}
{\rm In \cite[Example 2.8]{RGR}  the author proved that the theory of double cross products of  Hopf quasigroups, introduced in \cite{our2}, provides an interesting family of $a$-comonoidal distributive laws. In the following lines we recall the  main details. 
	
Let  $H$ be a  Hopf quasigroup. The pair $(M,\varphi_{M})$ is said to be a  left $H$-quasimodule if $M$ is an object in ${\sf C}$ and $\varphi_{M}:H\ot M\rightarrow M$ is a morphism in ${\sf C}$ (called the action) satisfying
\begin{equation}
\label{uq}
\varphi_{M}\circ(\eta_{H}\ot M)=id_{M}
\end{equation}
and
\begin{equation}
\label{pq}
\varphi_{M}\co (H\ot\varphi_{M})\co  (((H\ot \lambda_{H})\co \delta_H)\ot M)=\varepsilon_H\ot M=\varphi_{M}\co (\lambda_H\ot \varphi_M)\co (\delta_H\ot M).
\end{equation}
		
Given two left ${H}$-quasimodules $(M,\varphi_{M})$, $(N,\varphi_{N})$ and a morphism  $f:M\rightarrow N$ in ${\sf C}$, we will say that $f$ is a morphism of left
$H$-quasimodules if 
\begin{equation}
\label{mor}
\varphi_{N}\co(H\ot f)=f\co\varphi_{M}.
\end{equation}

We will say that a unital magma $A$ is a left $H$-quasimodule magma  if it is a left $H$-quasimodule with action $\varphi_{A}: H\ot A\rightarrow A$ and the following equalities 
\begin{equation}
\label{etaq}
\varphi_{A}\circ(H\ot \eta_A)=\varepsilon_H\ot \eta_A,
\end{equation}
\begin{equation}
\label{muq}
\mu_A\co \varphi_{A\ot A}=\varphi_A\co (H\ot \mu_A),
\end{equation}
hold, i.e., $\eta_{A}$ and $\mu_{A}$ are quasimodule morphisms  where the action on $A\otimes A$ is defined by $\varphi_{A\ot A}=(\varphi_{A}\otimes \varphi_{A})\circ (H\otimes c_{H,A}\otimes A) \circ (\delta_{H}\otimes A\otimes A)$
		
A comonoid $A$ is a left $H$-quasimodule comonoid if it is a left $H$-quasimodule with action $\varphi_{A}$ and 
\begin{equation}
\label{eq}
\varepsilon_A\co \varphi_{A}=\varepsilon_H\ot \varepsilon_A,
\end{equation}
\begin{equation}
\label{dq}
\delta_A\co \varphi_{A}=\varphi_{A\ot A}\co (H\ot\delta_A),
\end{equation}
hold, i.e., $\varepsilon_{A}$ and $\delta_{A}$ are quasimodule morphisms.

Replacing (\ref{pq}) by the equality 
\begin{equation}
\label{pqmod}
\varphi_{M}\co(H\ot\varphi_{M})=\varphi_M\co (\mu_H\ot M),
\end{equation}
we have the definition of left $H$-module  and the ones of left $H$-module magma and comonoid. Note that the pair $(H, \mu_H)$ is not an $H$-module but it is an  $H$-quasimodule. Morphisms between left $H$-modules are defined as for $H$-quasimodules and we denote the category of left $H$-modules by $_{H}${\sf Mod}. Obviously we have similar definitions for the right side.
		
In \cite[Corollary 5.4]{our2} the authors  prove that, if $A$, $H$ are  Hopf quasigroups,  $(A, \varphi_A)$ is a left $H$-module comonoid, $(H, \phi_H)$  is a right $A$-module comonoid and 
$$\Psi=(\varphi_A\ot \phi_H)\co \delta_{H\ot A},$$ 
the following assertions are equivalent:
\begin{itemize}
\item [(i)] The double cross product $A\bowtie H$ built on the object $A\ot H$ with product
$$\mu_{A\bowtie H}=(\mu_A\ot \mu_H)\co (A\ot \Psi\ot H)$$
and tensor product unit, counit and coproduct, is a Hopf quasigroup with antipode
$$\lambda_{A\bowtie H}=\Psi\co (\lambda_H\ot \lambda_A)\co c_{A,H}.$$
			
\item [(ii)] The  equalities
\begin{equation}\label{d1}
\varphi_A\co (H\ot \eta_A)=\varepsilon_H\ot \eta_A,
\end{equation}
\begin{equation}\label{d2}
\phi_H\co (\eta_H\ot A)=\eta_H\ot \varepsilon_A,
\end{equation}
\begin{equation}\label{d3}
(\phi_H\ot \varphi_A)\co \delta_{H\ot A}=c_{A,H}\co \Psi,
\end{equation}
\begin{equation}\label{d4}
\varphi_A\co (H\ot \mu_A)\co (\lambda_H\ot \lambda_A\ot A)=\mu_A\co(A\ot \varphi_A)\co ((\Psi\co (\lambda_H\ot \lambda_A))\ot A),
\end{equation}
\begin{equation}\label{d5}
\mu_H\co(\phi_H\ot \mu_H)\co (\lambda_H\ot \Psi \ot H)\co  (\delta_H\ot A\ot H)=\varepsilon_H\ot \varepsilon_A\ot H,
\end{equation}
\begin{equation}\label{d6}
\mu_H\co(\phi_H\ot \mu_H)\co (H\ot \Psi\ot H)\co  (((H\ot \lambda_H)\co\delta_H)\ot A\ot H)=\varepsilon_H\ot \varepsilon_A\ot H,
\end{equation}
\begin{equation}\label{d7}
\phi_H\co (\mu_{H}\ot A)\co (H\ot\lambda_H\ot \lambda_A)=\mu_H\co(\phi_{H}\ot H)\co (H\ot (\Psi\co (\lambda_H\ot \lambda_A))),
\end{equation}
\begin{equation}\label{d8}
\mu_A\co(\mu_{A}\ot \varphi_A)\co (A\ot \Psi\ot \lambda_{A})\co (A\ot H\ot \delta_{A})=A\ot \varepsilon_H\ot \varepsilon_A,
\end{equation}
\begin{equation}\label{d9}
\mu_{A}\co (\mu_{A}\ot \varphi_{A})\co (A\ot \Psi\ot A)\co (A\ot H\ot ((\lambda_{A}\ot A)\co \delta_{A}))=A\ot \varepsilon_H\ot \varepsilon_A,
\end{equation}
hold.
\end{itemize}
		
Under the conditions (\ref{d1})-(\ref{d9}), we can prove that $\Psi$ is an example of $a$-comonoidal distributive law of $H$ over $A$ (see \cite[Example 2.8]{RGR}) and then $A\bowtie H$ an example of wreath product associated to $\Psi$. 

Using the terminology that can be found in the literature on double cross products of Hopf algebras, we will say that, if $A$ and $H$ are in the conditions of this example,  $(A,H)$ is a matched pair of Hopf quasigroups.

Following, \cite[Definition 4.12]{Majidesfera}, we will say that a Hopf quasigroup $A$ is a left $H$-module Hopf quasigroup if it is a a left $H$-module monoid and comonoid. By, \cite[Proposition 4.14]{Majidesfera}, we know that if we denote the action of $H$ over $A$ by $\varphi_{A}$ and we assume that $H$ is cocommutative, there is a left cross product Hopf quasigroup $A\rtimes H$ built on $A\otimes H$ with tensor coproduct and unit and 
$$\mu_{A\rtimes H}=(\mu_{A}\otimes \mu_{H})\circ (A\otimes \Psi\otimes H), \;\;\; \mu_{A\rtimes H}=\Psi\co (\lambda_H\ot \lambda_A)\co c_{A,H}$$
where $\Psi=(\varphi_{A}\otimes H)\circ (H\otimes c_{H,A})\circ (\delta_{H}\otimes A)$. Then, the Hopf quasigroup $A\rtimes H$ is an example of double cross product of Hopf quasigroups with $\phi_{H}=H\otimes \varepsilon_{A}$.

}
\end{example}

The previous definitions and results can be extended to the wreath coproduct setting of Hopf coquasigroups by dualization as follows. 

\begin{definition}
\label{dlc} {\rm Le $D$, $B$ be Hopf coquasigroups. A morphism $\Omega:D\ot B\rightarrow B\ot D$ is said to be a codistributive law of $D$ over $B$ if the following identities
\begin{equation}
\label{dl1c}
(B\otimes \lambda_{B}\otimes \lambda_{D})\circ (\delta_{B}\otimes D)\circ \Omega=(B\otimes \lambda_{B}\otimes \lambda_{D})\circ (B\otimes \Omega)\circ (\Omega\otimes B)\circ (D\otimes \delta_{B}),
\end{equation}
\begin{equation}
\label{dl2c}
(\lambda_{B}\otimes \lambda_{D}\otimes D)\circ (B\otimes \delta_{D})\circ \Omega=(\lambda_{B}\otimes \lambda_{D}\otimes D)\circ (\Omega\otimes D)\circ (D\otimes \Omega)\circ (\delta_{D}\otimes B),
\end{equation}
\begin{equation}
\label{dl3c}
(\varepsilon_{B}\otimes D)\circ \Omega=D\otimes \varepsilon_{B},
\end{equation}
\begin{equation}
\label{dl4c}
(B\otimes \varepsilon_{D})\circ \Omega=\varepsilon_{D}\otimes B, 
\end{equation}
hold.
	}
\end{definition}

If  the antipodes of $D$ and $B$ are isomorphisms (for example, if $D$ and $B$ are finite objects), the identities (\ref{dl1c}) and (\ref{dl2c}) are equivalent to 
\begin{equation}
\label{dl1-1c}
(\delta_{B}\otimes D)\circ \Omega=(B\otimes \Omega)\circ (\Omega\otimes B)\circ (D\otimes \delta_{B}),
\end{equation}
\begin{equation}
\label{dl2-1c}
(B\otimes \delta_{D})\circ \Omega=(\lambda_{B}\otimes \lambda_{D}\otimes D)\circ (\Omega\otimes D)\circ (D\otimes \Omega)\circ (\delta_{D}\otimes B),
\end{equation}
respectively. Then, in this case, the conditions of the definition of codistributive law for Hopf quasigroups are the ones that we can find for  codistributive laws between comonoids, i.e., $\Omega$ is compatible with the counit and the coproduct of $D$ and $B$. 

\begin{definition}
\label{cdlc} {\rm Le $D$, $B$ be Hopf coquasigroups and let  $\Omega:D\ot B\rightarrow B\ot D$  be a codistributive law of $D$ over $B$. The codistributive law $\Omega$ is said to be monoidal if it is a monoid morphism, i.e., the following identities
\begin{equation}
\label{cdl1c}
\Omega\circ \mu_{D\ot B}=\mu_{B\otimes D}\circ (\Omega \ot \Omega),
\end{equation}
\begin{equation}
\label{cdl2c}
\Omega\circ (\eta_{D}\ot \eta_{B}) =\eta_{B}\ot \eta_{D}, 
\end{equation}
hold.
}
\end{definition}

\begin{definition}
\label{cdlc} {\rm Le $D$, $B$ be Hopf coquasigroups and let  $\Omega:D\ot B\rightarrow B\ot D$   be a monoidal codistributive law of $D$ over $B$. We will say that $\Omega$ is  an $a$-monoidal codistributive law of $D$ over $B$  if the following identities
\begin{equation}
\label{adl1c}
(D\otimes B\otimes (\mu_{D}\circ (D\otimes \lambda_{D}))\circ (D\otimes \Omega\otimes D)\circ (\delta_{D}\otimes \Omega)\circ (\delta_{D}\otimes B)=D\otimes B\otimes \eta_{D},
\end{equation}
\begin{equation}
\label{adl2c}
(D\otimes B\otimes (\mu_{D}\circ (\lambda_{D}\otimes D))\circ (D\otimes \Omega\otimes D)\circ (\delta_{D}\otimes \Omega)\circ (\delta_{D}\otimes B)=D\otimes B\otimes \eta_{D},
\end{equation}
\begin{equation}
\label{adl3c}
((\mu_{B}\circ (B\otimes \lambda_{B}))\otimes D\otimes B)\circ (B\otimes \Omega\otimes B)\circ (\Omega\otimes \delta_{B})\circ (D\otimes \delta_{B})=\eta_{B}\otimes D\otimes B, 
\end{equation}
\begin{equation}
\label{adl4c}
((\mu_{B}\circ (\lambda_{B}\otimes B))\otimes D\otimes B)\circ (B\otimes \Omega\otimes B)\circ (\Omega\otimes \delta_{B})\circ (D\otimes \delta_{B})=\eta_{B}\otimes D\otimes B,
\end{equation} 
hold.
}
\end{definition}

Note that, if $D$ and $B$ are Hopf algebras and $\Omega:D\ot B\rightarrow B\ot D$ is a codistributive law between the comonoids $D$ and $B$, the equalities (\ref{adl1c}), (\ref{adl2c}), (\ref{adl3c}) and (\ref{adl4c}) always hold. On the other hand, it is obvious that, if $H$, $A$ are finite Hopf quasigroups, $\Psi$ is an $a$-comonoidal distributive law of $H$ over $A$ iff $\Psi^{\ast}$ is an $a$-monoidal codistributive law of $H^{\ast}$ over $A^{\ast}$. 

If we dualize \cite[Theorem 3.1]{RGR} we have the following: For an $a$-monoidal codistributive law $\Omega:D\ot B\rightarrow B\ot D$ of $D$ over $B$, the wreath coproduct $D\ot_{\Omega} B$ built on $D\ot B$ with the wreath coproduct comagma
\begin{equation}
\label{wpa1}
\delta_{D\ot_{\Omega} B}=(D\otimes \Omega\otimes B)\circ (\delta_{D}\otimes \delta_{B})
\end{equation} 
 counit $\varepsilon_{D\ot_{\Omega} B}=\varepsilon_{D}\otimes \varepsilon_{B}$,  unit $\eta_{D\ot_{\Omega} B}=\eta_{D\otimes B}$, product $\mu_{D\otimes_{\Omega} B}=\mu_{D\otimes B}$ and antipode  
\begin{equation}
\label{64}
\lambda_{D\otimes_{\Omega} B}=c_{B,D}\circ (\lambda_{B}\ot \lambda_{D})\circ \Omega,
\end{equation}
is a Hopf coquasigroup.

\begin{example}
\label{1c}
{\rm The dual of Example \ref{1} provides a family of $a$-monoidal codistributive laws. In this case we work with the theory of double cross coproducts of  Hopf coquasigroups. 
		
Let  $D$ be a  Hopf coquasigroup. The pair $(P,\rho_{P})$ is said to be a right $D$-quasicomodule if $P$ is an object in ${\sf C}$ and $\rho_{P}:P\rightarrow P\otimes D$ is a morphism in ${\sf C}$ (called the coaction) satisfying
\begin{equation}
\label{uqc}
(P\otimes \varepsilon_{D})\co \rho_{P}=id_{P}
\end{equation}
and
\begin{equation}
\label{pqc}
(P\otimes (\mu_{D}\circ (\lambda_{D}\otimes D)))\circ (\rho_{P}\otimes D)\circ \rho_{P}=P\otimes \eta_{D}=(P\otimes \mu_{D})\circ (\rho_{P}\otimes \lambda_{D})\circ \rho_{P} .
\end{equation}
		
Given two right $D$-quasicomodules $(P,\rho_{P})$, $(Q,\rho_{Q})$ and one morphism  $g:P\rightarrow Q$ in ${\sf C}$, we will say that $f$ is a morphism of right
$H$-quasicomodules if 
\begin{equation}
\label{morc}
(g\otimes D)\circ \rho_P=\rho_{Q}\circ g.
\end{equation}

We will say that a counital comagma $B$ is a right $D$-quasicomodule comagma  if it is a right $D$-quasicomodule with coaction $\rho_{B}: B\rightarrow B\otimes D$ and the following equalities 
\begin{equation}
\label{etaqc}
(\varepsilon_{B}\otimes D)\circ \rho_{B}=\varepsilon_B\ot \eta_D,
\end{equation}
\begin{equation}
\label{muqc}
\rho_{B\otimes B}\circ \delta_{B}=(\delta_{B}\otimes D)\circ \rho_{B},
\end{equation}
hold, i.e., $\varepsilon_{B}$ and $\delta_{B}$ are morphisms of right quasicomodules where the coaction on $B\otimes B$  is defined by  $\rho_{B\otimes B}=(B\otimes B\otimes \mu_{D})\circ (B\otimes c_{B,D}\otimes D)\circ (\rho_{B}\otimes \rho_{B}).$
		
A monoid $B$ is a right $D$-quasicomodule monoid if it is a right $D$-quasicomodule with coaction $\rho_{B}$ and 
\begin{equation}
\label{eqc}
\rho_{B}\circ \eta_{B}=\eta_{B}\otimes \eta_{D},
\end{equation}
\begin{equation}
\label{dqc}
\rho_{B}\circ \mu_{B}=(\mu_{B}\otimes D)\circ \rho_{B\otimes B},
\end{equation}
hold, i.e., $\eta_{B}$ and $\mu_{B}$ are right quasicomodule morphisms.

Replacing (\ref{pqc}) by the equality 
\begin{equation}
\label{pqmodc}
(\rho_{P}\otimes D)\circ \rho_{P}=(P\otimes \delta_{D})\circ \rho_{P},
\end{equation}
we have the definition of right $D$-comodule  and the ones of right $D$-comodule comagma and monoid. Note that the pair $(D, \delta_D)$ is not an $D$-comodule but it is an  $D$-quasicomodule. Morphisms between right $D$-comodules are defined as for $D$-quasicomodules and we denote the category of right $D$-comodules by {\sf Mod}$^{D}$. Obviously we have similar definitions for the left side.
		
Then by the dual proof of \cite[Corollary 5.4]{our2} we can prove the following. Let $B$, $D$ be  Hopf coquasigroups. Let $(B, \rho_B)$ be a right $D$-comodule monoid and  let $(D, r_D)$  be a  left $B$-comodule monoid and write
$$\Omega=\mu_{B\otimes D}\circ (r_{D}\otimes \rho_{B}).$$ 
Then, the following assertions are equivalent:
		
\begin{itemize}
\item [(i)] The double cross coproduct $D\infty B$ built on the object $D\otimes B$ with coproduct
$$\delta_{D\infty B}=(D\otimes \Omega\otimes B)\circ (\delta_{D}\otimes \delta_{B})$$
and tensor product counit, unit and product, is a Hopf coquasigroup with antipode
$$\lambda_{D\infty B}=c_{B,D}\circ (\lambda_{B}\otimes \lambda_{D})\circ \Omega.$$
			
\item [(ii)] The  equalities
\begin{equation}\label{d1c}
(\varepsilon_{B}\otimes D)\circ \rho_{B}=\varepsilon_{B}\otimes \eta_{D},
\end{equation}
\begin{equation}\label{d2c}
(B\otimes \varepsilon_{D})\circ r_{D}=\eta_{B}\otimes \varepsilon_{D},
\end{equation}
\begin{equation}\label{d3c}
\mu_{B\otimes D}\circ (\rho_{B}\otimes r_{D})=\Omega\circ c_{B,D},
\end{equation}
\begin{equation}\label{d4c}
(B\otimes \lambda_{B}\otimes \lambda_{D})\circ (\delta_{B}\otimes D)\circ \rho_{B}=(B\otimes ((\lambda_{B}\otimes \lambda_{D})\circ \Omega)\circ (\rho_{B}\otimes B)\circ \delta_{B},
\end{equation}
\begin{equation}\label{d5c}
(D\otimes B\otimes \mu_{D})\circ (D\otimes \Omega\otimes \lambda_{D})\circ (\delta_{D}\otimes r_{D})\circ \delta_{D}=D\otimes \eta_{B}\otimes \eta_{D},
\end{equation}
\begin{equation}\label{d6c}
(D\otimes B\otimes (\mu_{D}\circ (\lambda_{D}\otimes D)))\circ (D\otimes \Omega\otimes D)\circ (\delta_{D}\otimes r_{D})\circ \delta_{D}=D\otimes \eta_{B}\otimes \eta_{D},
\end{equation}
\begin{equation}\label{d7c}
(\lambda_{B}\otimes \lambda_{D}\otimes D)\circ (B\otimes \delta_{D})\circ r_{D}=(((\lambda_{B}\otimes \lambda_{D})\circ \Omega)\otimes D)\circ (D\otimes r_{D})\circ \delta_{D},
\end{equation}
\begin{equation}\label{d8c}
(\mu_{B}\otimes D\otimes B)\circ (\lambda_{B}\otimes \Omega\otimes B)\circ (\rho_{B}\otimes \delta_{B})\circ \delta_{B}=\eta_{B}\otimes \eta_{D}\otimes B,
\end{equation}
\begin{equation}\label{d9c}
((\mu_{B}\circ (B\otimes \lambda_{B}))\otimes D\otimes B)\circ (B\otimes \Omega\otimes B)\circ (\rho_{B}\otimes \delta_{B})\circ \delta_{B}=\eta_{B}\otimes \eta_{D}\otimes B,
\end{equation}
hold.
\end{itemize}
		
Under the conditions (\ref{d1c})-(\ref{d9c}), we can prove that $\Omega$ is an example of $a$-monoidal codistributive law of $D$ over $B$ and then $D\infty B$ an example of wreath coproduct associated to $\Omega$.

As in the Hopf quasigroup setting, if $B$ and $D$ are in the conditions of this example, we will say that  $(B,D)$ is a comatched pair of Hopf coquasigroups.

Recall that  cross coproducts Hopf coquasigroups (see \cite[Definition 5.12, Proposition 5.14]{Majidesfera}) are  examples of  double cross coproducts of  Hopf coquasigroups because is the dual of  cross products Hopf quasigroups (see the last paragraph of Example \ref{1}).
		
}
\end{example}

\section{Factorizations}

In the classical theory of Hopf algebras over a field ${\mathbb F}$ it is said that a Hopf algebra $X$ factorises as $X=AH$ if there exists Hopf subalgebras $A$ and $H$, with inclusions morphisms $i_{A}:A\rightarrow X$, $i_{H}:H\rightarrow X$, such that  the morphism $\omega_{X}=\mu_{X}\circ (i_{A}\otimes i_{H}):A\otimes H\rightarrow X$ is an isomorphism. Dually, if $X$ is finite dimensional, $\omega_{X}$ is an isomorphism iff the dual morphism $\omega_{X}^{\ast}=(i_{H}^{\ast}\otimes i_{A}^{\ast})\circ \delta_{B^{\ast}}$ is an  isomorphism. By \cite[Theorem 7.2.3]{MAJDCP}, $X=AH$ iff $X=A\bowtie H$, i.e., $X$ is obtained as a double cross product of Hopf algebras (or $(A, H)$ is a matched pair of Hopf algebras), equivalently, $X^{\ast}=H^{\ast}\infty A^{\ast}$ ($(A^{\ast}, H^{\ast})$ is a comatched pair of Hopf algebras). 
	
The main target of this section is to prove that for Hopf quasigroups we have similar results. In other words, the exact factorization problem for Hopf quasigroups is equivalent to the existence of a double cross product of Hopf quasigroups.

\begin{definition}
\label{factq}
{\rm Let $X$ be a Hopf quasigroup in {\sf C}. Let $H$, $A$ be Hopf subquasigroups of $X$ with inclusion morphisms $i_{H}:H\rightarrow X$, $i_{A}:A\rightarrow X$ respectively. Let  $\omega_{X}$ and $\theta_{X}$ be the morphisms defined by 
$$\omega_{X}=\mu_{X}\circ (i_{A}\otimes i_{H}):A\otimes H\rightarrow X, \;\;\;\theta_{X}=\mu_{X}\circ (i_{H}\otimes i_{A}):H\otimes A\rightarrow X.$$

We will say that  $X$ factorizes as $X=AH$ if $\omega_{X}$ is an isomorphism and the following identities
\begin{equation}\label{factq1}
\mu_{X}\circ (\omega_{X}\otimes X)=\mu_{X}\circ (i_{A}\otimes (\mu_{X}\circ (i_{H}\otimes X))),
\end{equation}
\begin{equation}\label{factq2}
\mu_{X}\circ (X\otimes\omega_{X})=\mu_{X}\circ ((\mu_{X}\circ (X\otimes i_{A}))\otimes i_{H}),
\end{equation}
\begin{equation}\label{factq3}
\mu_{X}\circ ((\theta_{X}\circ (\lambda_{H}\otimes \lambda_{A}))\otimes X)=\mu_{X}\circ ((i_{H}\circ \lambda_{H})\otimes (\mu_{X}\circ ((i_{A}\circ \lambda_{A})\otimes X))),
\end{equation}
\begin{equation}\label{factq4}
\mu_{X}\circ (X\otimes (\theta_{X}\circ (\lambda_{H}\otimes \lambda_{A})))=\mu_{X}\circ ((\mu_{X}\circ (X\otimes (i_{H}\circ \lambda_{H} ))\otimes (i_{A}\circ\lambda_{A}))
\end{equation}
hold.

Note that, if $H$ and $A$ are finite we can remove the antipodes in (\ref{factq3}) and (\ref{factq4}). Then these identities become  in 
\begin{equation}\label{factq31}
\mu_{X}\circ (\theta_{X}\otimes X)=\mu_{X}\circ (i_{H}\otimes (\mu_{X}\circ (i_{A}\otimes X)),
\end{equation}
\begin{equation}\label{factq41}
\mu_{X}\circ (X\otimes \theta_{X})=\mu_{X}\circ ((\mu_{X}\circ (X\otimes i_{H}))\otimes i_{A}).
\end{equation}

Finally, note that $\omega_{X}$ and $\theta_{X}$ are comonoid morphisms because $i_{A}$, $i_{H}$ are comonoid morphisms and (\ref{mu-eps}) and (\ref{delta-mu}) holds for the Hopf quasigroup $X$. Obviously, if $\omega_{X}$ is a comonoid isomorphism, its dual is a monoid isomorphism.

}
\end{definition}

\begin{example}
{\rm 
\label{ex1fact}  Suppose that $(A, H)$ is a matched pair of Hopf quasigroups.
By Example \ref{1}, the double cross product $A\bowtie H$ is a Hopf quasigroup.  The morphisms $i_{A}=A\otimes \eta_{H}:A\rightarrow A\bowtie H$ and $i_{H}=\eta_{A}\otimes H:H\rightarrow A\bowtie H$ are morphisms of Hopf quasigroups because (\ref{dl3}), (\ref{dl4})
holds and (\ref{eta-eps}) and (\ref{delta-eta}) also hold for $H$ and $A$. By the properties of the units and (\ref{dl4}), we obtain that 
$$\omega_{A\bowtie H}=id_{A\otimes H}, \;\;\; \theta_{A\bowtie H}=\Psi.$$

Then, by (\ref{dl4}),   
$$\mu_{A\bowtie H}\circ (i_{A}\otimes (\mu_{A\bowtie H}\circ (i_{H}\otimes A\bowtie H)))=\mu_{A\bowtie H}=\mu_{A\bowtie H}\circ (\omega_{A\bowtie H}\otimes A\bowtie H)$$
and, (\ref{factq1}) holds. Similarly, by (\ref{dl3}), we obtain (\ref{factq2}) because 
$$\mu_{A\bowtie H}\circ ((\mu_{A\bowtie H}\circ (A\bowtie H\otimes i_{A}))\otimes i_{H})=
\mu_{A\bowtie H}=\mu_{A\bowtie H}\circ (A\bowtie H\otimes\omega_{A\bowtie H}).$$

On the other hand, 

\begin{itemize}
\item[ ]$\hspace{0.38cm} \mu_{A\bowtie H}\circ ((\theta_{A\bowtie H}\circ (\lambda_{H}\otimes \lambda_{A}))\otimes A\bowtie H)$
	
\item[ ]$=(\mu_{A}\otimes \mu_{H})\circ (A\otimes \Psi\otimes H)\circ ((\Psi\circ (\lambda_{H}\otimes \lambda_{A}))\otimes A\otimes H) $ {\scriptsize ({\blue by the unit properties})} 
	
\item[ ]$= (A\otimes \mu_{H})\circ ((\Psi\circ (H\otimes \mu_{A})\circ (\lambda_{H}\otimes \lambda_{A}\otimes A))\otimes H)$  {\scriptsize  ({\blue by (\ref{dl1})})}
	
\item[ ]$=\mu_{A\bowtie H}\circ ((i_{H}\circ \lambda_{H})\otimes (\mu_{A\bowtie H}\circ ((i_{A}\circ \lambda_{A})\otimes A\bowtie H)))$ {\scriptsize  ({\blue (\ref{dl4})}),}
	
\end{itemize}

and (\ref{factq3}) holds. Similarly, 

\begin{itemize}
\item[ ]$\hspace{0.38cm}\mu_{A\bowtie H}\circ (A\bowtie H\otimes (\theta_{A\bowtie H}\circ (\lambda_{H}\otimes \lambda_{A})))$
	
\item[ ]$= (\mu_{A}\otimes \mu_{H})\circ (A\otimes \Psi\otimes H)\circ (A\otimes H\otimes (\Psi\circ (\lambda_{H}\otimes \lambda_{A})))$ {\scriptsize ({\blue by the unit properties})} 
	
\item[ ]$=(\mu_{A}\otimes H)\circ (A\otimes (\Psi\circ (\mu_{H}\otimes A)\circ (H\otimes \lambda_{H}\otimes \lambda_{A})))$  {\scriptsize  ({\blue by (\ref{dl2})})}
	
\item[ ]$=\mu_{A\bowtie H}\circ ((\mu_{A\bowtie H}\circ (A\bowtie H\otimes (i_{H}\circ \lambda_{H})))\otimes (i_{A}\circ \lambda_{A})) $ {\scriptsize  ({\blue by (\ref{dl3})}),}
	
\end{itemize}
and, as a consequence, (\ref{factq4}) also holds.

Therefore, any double cross product of Hopf quasigroups induces an example of factorization. 

}
\end{example}

\begin{theorem}
\label{prinprev} Let $H$, $A$ be Hopf subquasigroups of  a Hopf quasigroup $X$ with inclusion morphisms $i_{H}:H\rightarrow X$, $i_{A}:A\rightarrow X$ respectively. If  $X$ factorises as $X=AH$, the morphism 
$$\Psi=\omega_{X}^{-1}\circ \theta_{X}:H\otimes A\rightarrow A\otimes H$$
is a comonoidal distributive law of $H$ over $A$. Moreover, if the antipodes of $H$ and $A$ are isomorphisms $\Psi$ is an $a$-comonoidal distributive law of $H$ over $A$.
\end{theorem}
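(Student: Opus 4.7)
The plan is to verify the distributive-law axioms directly in the general case, then extract the $a$-comonoidal identities from the fact that $\omega_{X}$ transports the Hopf quasigroup structure of $X$ to a wreath-type structure on $A\ot H$.

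First I would observe that both $\theta_{X}$ and $\omega_{X}$ are comonoid morphisms, because $\mu_{X}$ is one by (\ref{mu-eps})--(\ref{delta-mu}) and the inclusions $i_{A}, i_{H}$ are morphisms of Hopf quasigroups. Consequently $\omega_{X}^{-1}$ is also a comonoid morphism and $\Psi=\omega_{X}^{-1}\co \theta_{X}$ is a comonoid morphism, giving (\ref{cdl1})--(\ref{cdl2}). The unit axioms (\ref{dl3}), (\ref{dl4}) then reduce to the identities $\omega_{X}\co (\eta_{A}\ot H)=i_{H}$ and $\omega_{X}\co (A\ot \eta_{H})=i_{A}$, both of which follow from $i_{A}\co \eta_{A}=\eta_{X}=i_{H}\co \eta_{H}$.

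For (\ref{dl1}) I would apply $\omega_{X}$ to both sides. The left side becomes $\theta_{X}\co (H\ot \mu_{A})\co (\lambda_{H}\ot \lambda_{A}\ot A)$; by multiplicativity of $i_{A}$ and (\ref{factq3}) (precomposed with $A\ot A\ot i_{A}$) this rewrites as $\mu_{X}\co ((\theta_{X}\co (\lambda_{H}\ot \lambda_{A}))\ot i_{A})$. On the right, (\ref{factq2}) yields $\omega_{X}\co (\mu_{A}\ot H)=\mu_{X}\co (i_{A}\ot \omega_{X})$, and then (\ref{factq1}) converts $\mu_{X}\co (i_{A}\ot \theta_{X})$ into $\mu_{X}\co (\omega_{X}\ot i_{A})$, matching the left side. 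Equation (\ref{dl2}) would be established symmetrically using (\ref{factq1}), (\ref{factq2}) and (\ref{factq4}).

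For the $a$-comonoidal axioms I would first show that the transport of $\mu_{X}$ satisfies $\omega_{X}^{-1}\co \mu_{X}\co (\omega_{X}\ot \omega_{X})=(\mu_{A}\ot \mu_{H})\co (A\ot \Psi\ot H)=\mu_{A\ot_{\Psi}H}$. This follows by expanding $\mu_{X}\co (\omega_{X}\ot \omega_{X})=\mu_{X}\co (\mu_{X}\ot \mu_{X})\co (i_{A}\ot i_{H}\ot i_{A}\ot i_{H})$ and chaining applications of (\ref{factq1}), (\ref{factq2}) together with multiplicativity of $i_{A}, i_{H}$ and the definition of $\Psi$. Since $\omega_{X}$ is a unit-preserving comonoid isomorphism as well as a magma isomorphism for this wreath product, transporting $\lambda_{X}$ endows $(A\ot H, \mu_{A\ot_{\Psi}H}, \eta_{A}\ot \eta_{H}, \delta_{A\ot H}, \varepsilon_{A\ot H}, \tilde{\lambda})$ with a Hopf quasigroup structure, where $\tilde{\lambda}=\omega_{X}^{-1}\co \lambda_{X}\co \omega_{X}$, so (\ref{lH}) and (\ref{rH}) hold for it. Using (\ref{antipode-morphism}) for $i_{A}, i_{H}$ together with the identities $\omega_{X}\co (\eta_{A}\ot H)=i_{H}$ and $\omega_{X}\co (A\ot \eta_{H})=i_{A}$ one deduces $\tilde{\lambda}\co (\eta_{A}\ot H)=\eta_{A}\ot \lambda_{H}$ and $\tilde{\lambda}\co (A\ot \eta_{H})=\lambda_{A}\ot \eta_{H}$. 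Evaluating the two halves of (\ref{lH}) on inputs of the form $(\eta_{A}\ot h)\ot(a\ot h')$ then gives precisely (\ref{adl1}) and (\ref{adl2}), while evaluating the two halves of (\ref{rH}) on inputs $(a\ot h)\ot(b\ot \eta_{H})$ gives (\ref{adl3}) and (\ref{adl4}). The main obstacle is the combinatorics of this last step: identifying the transport of $\mu_{X}$ as $\mu_{A\ot_{\Psi}H}$ requires chaining several applications of the weakened associativity identities, and unpacking the wreath-product Hopf quasigroup axioms on the special inputs demands careful tracking of multiple $\Psi$-components to recover the $a$-comonoidal formulas exactly as stated.
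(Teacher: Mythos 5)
Your proposal is correct. For the first half (the axioms (\ref{dl1})--(\ref{dl4}) and comonoidality) you follow essentially the same chain of rewritings as the paper: compose with $\omega_{X}$, use multiplicativity of $i_{A}$, $i_{H}$ and the identities (\ref{factq1})--(\ref{factq4}). Where you genuinely diverge is in the $a$-comonoidal part. The paper verifies (\ref{adl1})--(\ref{adl4}) by direct equational computation: it composes each left-hand side with $\omega_{X}$ and reduces it, via (\ref{factq1}), the antipode-free identities (\ref{factq31}), (\ref{factq41}) and (\ref{antipode-morphism}), to the Hopf quasigroup axioms (\ref{lH}), (\ref{rH}) for $X$. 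You instead first prove that $\omega_{X}$ intertwines $\mu_{X}$ with the wreath product $\mu_{A\ot_{\Psi}H}$ --- which is exactly the content of the paper's Theorem \ref{prinprev1}, proved there afterwards using only (\ref{factq1}) and (\ref{factq2}) --- then transport the whole Hopf quasigroup structure of $X$ to $A\ot H$, compute the transported antipode on $\eta_{A}\ot H$ and $A\ot \eta_{H}$ via (\ref{antipode-morphism}), and specialize (\ref{lH}) and (\ref{rH}) to read off (\ref{adl1})--(\ref{adl4}). This is not circular (the multiplicativity of $\omega_{X}$ does not depend on the $a$-comonoidal identities), and the specializations do produce exactly the stated formulas. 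Your route buys two things: it absorbs Theorem \ref{prinprev1} into the argument, and it never invokes (\ref{factq31}) or (\ref{factq41}), so the ``moreover'' clause comes out without actually using the hypothesis that the antipodes of $H$ and $A$ are isomorphisms --- a mild strengthening of the stated result. The paper's route keeps the two theorems separate and remains a purely equational verification; yours trades that for a transport-of-structure argument whose only cost is the bookkeeping you already acknowledge. (One cosmetic slip: in your treatment of (\ref{dl1}) the precomposition used with (\ref{factq3}) should be with $H\ot A\ot i_{A}$, not $A\ot A\ot i_{A}$.)
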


\begin{proof}
The condition (\ref{dl1})  of Definition \ref{dl} holds because: 
\begin{itemize}
\item[ ]$\hspace{0.38cm}  \omega_{X}\circ (\mu_{A}\ot H)\co (A\ot \Psi)\co (\Psi\ot A)\co (\lambda_{H}\ot \lambda_{A}\ot A) $
	
\item[ ]$= \mu_{X}\circ ((\mu_{X}\circ (i_{A}\otimes i_{A}))\otimes i_{H})\circ (A\otimes (\omega_{X}^{-1}\circ \theta_{X})) \circ ((\omega_{X}^{-1}\circ \theta_{X}\circ (\lambda_{H}\otimes \lambda_{A}))\otimes A)$ {\scriptsize ({\blue by definition}} 
\item[ ]$\hspace{0.38cm}$ {\scriptsize {\blue  of $\omega_{X}$ and the condition of monoid morphism for $i_{A}$)} }
	
\item[ ]$= \mu_{X}\circ (i_A\otimes (\omega_{X}\circ \omega_{X}^{-1}\circ \theta_{X})) \circ ((\omega_{X}^{-1}\circ \theta_{X}\circ (\lambda_{H}\otimes \lambda_{A}))\otimes A) $  {\scriptsize  ({\blue by (\ref{factq2})})}
	
\item[ ]$=\mu_{X}\circ (i_{A}\otimes (\mu_{X}\circ (i_{H}\otimes i_{A}))) \circ  ((\omega_{X}^{-1}\circ \theta_{X}\circ (\lambda_{H}\otimes \lambda_{A}))\otimes A)$ {\scriptsize  ({\blue by definition of $\theta_{X}$})}

\item[ ]$= \mu_{X} \circ  ((\omega_{X}\circ \omega_{X}^{-1}\circ \theta_{X}\circ (\lambda_{H}\otimes \lambda_{A}))\otimes i_A) $ {\scriptsize ({\blue by (\ref{factq1})})} 

\item[ ]$=\mu_{X}\circ ((i_{H}\circ \lambda_{H})\otimes (\mu_{X}\circ ((i_{A}\circ \lambda_{A})\otimes i_{A}))) $  {\scriptsize  ({\blue by (\ref{factq3})})}

\item[ ]$=\theta_{X}\circ (\lambda_{H}\otimes (\mu_{A}\circ (\lambda_{A}\otimes A))) $ {\scriptsize  ({\blue the condition of monoid morphism for $i_{A}$})}

\item[ ]$= \omega_{X}\circ \Psi\co (H\ot \mu_{A})\co (\lambda_{H}\ot \lambda_{A}\ot A)$ {\scriptsize  ({\blue by the condition of isomorphism for $\omega_{X}$}).}
	
\end{itemize}

By a similar proof, using that $i_{H}$ is a monoid morphism,  (\ref{factq1}) instead  (\ref{factq2}) ,   (\ref{factq2}) instead  (\ref{factq1})  and  (\ref{factq4}) instead  (\ref{factq3}) , we obtain that $\Psi$ satisfies (\ref{dl2}) of Definition \ref{dl}.  Also, by the unit properties and the condition of monoid morphism for $i_{A}$ we have that 
$$\Psi\circ (H\otimes \eta_{A})=\omega_{X}^{-1}\circ i_{H}=\omega_{X}^{-1}\circ \omega_{X}\circ (\eta_{A}\otimes H) =\eta_{A}\otimes H$$
and this implies that  (\ref{dl3})  of Definition \ref{dl} holds. In the same way, by the unit properties and the condition of monoid morphism for $i_{H}$, we prove that   (\ref{dl4})  of Definition \ref{dl} holds. Therefore,  $\Psi$  is distributive law of $H$ over $A$.

On the other hand, $\Psi$ is comonoidal because it is a composition of comonoid morphisms.

Finally, if the antipodes of $H$ and $A$ are isomorphisms, $\Psi$ is $a$-comonoidal distributiva law beacuse, as in the previous lines, if we compose with $\omega_{X}$, we have 

\begin{itemize}
\item[ ]$\hspace{0.38cm}  \omega_{X}\circ (A\ot \mu_{H})\co (\Psi\ot \mu_{H})\co (H\ot \Psi\ot H)\co (((\lambda_{H}\ot H)\co \delta_{H})\ot A\ot H) $
	
\item[ ]$= \mu_{X}\circ (\theta_{X}\otimes (\mu_{X}\circ (i_{H}\otimes i_{H})))\circ (\lambda_{H}\otimes (\omega_{X}^{-1}\circ \theta_{X})\otimes H)\circ (\delta_{H}\otimes A\otimes H)$ {\scriptsize ({\blue by the condition of } }
\item[ ]$\hspace{0.38cm}$ {\scriptsize {\blue monoid morphism for $i_{H}$ and  (\ref{factq1}))} }
	
\item[ ]$=\mu_{X}\circ ((i_{H}\circ \lambda_{H})\otimes (\mu_{X}\circ (i_{A}\otimes (\mu_{X}\circ (i_{H}\otimes i_{H}))))) \circ (H\otimes (\omega_{X}^{-1}\circ \theta_{X})\otimes H)\circ (\delta_{H}\otimes A\otimes H) $  
\item[ ]$\hspace{0.38cm}${\scriptsize  ({\blue by (\ref{factq31})})}
	
\item[ ]$=\mu_{X}\circ ((i_{H}\circ \lambda_{H})\otimes (\mu_{X}\circ (\theta_{X}\otimes  i_{H})))\circ (\delta_{H}\otimes A\otimes H)  $ {\scriptsize  ({\blue by (\ref{factq1})})}
	
\item[ ]$= \mu_{X}\circ ((\lambda_{X}\circ i_{H})\otimes (\mu_{X}\circ (i_{H}\otimes X)))\circ (\delta_{H}\otimes \omega_{X})  $ {\scriptsize ({\blue by (\ref{factq31}) and (\ref{antipode-morphism}) for $i_{H}$})} 
	
\item[ ]$= \mu_X\circ (\lambda_X\ot \mu_X)\circ ((\delta_X\circ i_{H})\ot \omega_{X})$  {\scriptsize  ({\blue by the condition of comonoid morphism for $i_{H}$})}
	
\item[ ]$=(\varepsilon_{X}\circ i_{H}) \otimes \omega_{X}$ {\scriptsize  ({\blue by (\ref{lH}) for $X$})}
	
\item[ ]$= \varepsilon_{H}\otimes \omega_{X} $ {\scriptsize  ({\blue by the condition of comonoid morphism for $i_{H}$}).}
	
\end{itemize}

and then, (\ref{adl1}) of Definition \ref{cdl} holds. By a similar proof we can show that  (\ref{adl2}) of Definition \ref{cdl} also holds. On the other hand,  (\ref{adl3}) follows from 
\begin{itemize}
\item[ ]$\hspace{0.38cm}  \omega_{X}\circ  (\mu_{A}\ot H)\co (\mu_{A}\otimes \Psi)\co (A\otimes \Psi\otimes A)\circ (A\otimes H\otimes (( \lambda_{A}\otimes A)\circ \delta_{A}))$
	
\item[ ]$=\mu_{X}\circ ((\mu_{X}\circ (i_{A}\otimes i_{A}))\otimes \theta_{X})\circ (A\otimes  (\omega_{X}^{-1}\circ \theta_{X})\otimes A)\circ (A\otimes H\otimes  (( \lambda_{A}\otimes A)\circ \delta_{A}))$ {\scriptsize ({\blue by the  } }
\item[ ]$\hspace{0.38cm}$ {\scriptsize {\blue condition of monoid morphism for $i_{H}$ and  (\ref{factq2}))} }
	
\item[ ]$= \mu_{X}\circ ( (\mu_{X}\circ ((\mu_{X}\circ (X\otimes i_{A}))\otimes i_{H}))\otimes X)\circ (i_{A}\otimes  (\omega_{X}^{-1}\circ \theta_{X})\otimes X)\circ (A\otimes H\otimes  (( \lambda_{A}\otimes i_{A})\circ \delta_{A}))$  
\item[ ]$\hspace{0.38cm}${\scriptsize  ({\blue by (\ref{factq41})})}
	
\item[ ]$= \mu_{X}\circ ( \mu_{X}\otimes X)\circ (i_{A}\otimes  (\mu_{X}\circ (i_{H}\otimes X))\otimes X)\circ (A\otimes H\otimes  (((i_{A}\circ  \lambda_{A})\otimes i_{A})\circ \delta_{A}))$ {\scriptsize  ({\blue by (\ref{factq2})})}
	
\item[ ]$=  \mu_{X}\circ ( \mu_{X}\otimes X)\circ (\omega_{X}\otimes (((\lambda_{X}\circ  i_{A})\otimes i_{A})\circ \delta_{A}))  $ {\scriptsize ({\blue by (\ref{factq1}) and (\ref{antipode-morphism}) for $i_{A}$})} 
	
\item[ ]$= \mu_{X}\circ ( \mu_{X}\otimes X)\circ (\omega_{X}\otimes ((\lambda_{X}\otimes X)\circ \delta_{X}\circ i_{A}))  $  {\scriptsize  ({\blue by the condition of comonoid morphism for $i_{A}$})}
	
\item[ ]$= \omega_{X}\otimes (\varepsilon_{X}\circ i_{A})  $ {\scriptsize  ({\blue by (\ref{rH}) for $X$})}
	
\item[ ]$= \omega_{X}\otimes \varepsilon_{A} $ {\scriptsize  ({\blue by the condition of comonoid morphism for $i_{A}$}).}
	
\end{itemize}
and, similarly, we obtain (\ref{adl4}). 
\end{proof}

As a consequence of the previous theorem we obtain the following result.

\begin{theorem}
\label{prinprev1}
Let $H$, $A$ be Hopf subquasigroups of  a Hopf quasigroup $X$ such that the antipodes of $H$ and $A$ are isomorphisms. Assume that $X$ factorises as $X=AH$. Then, $\omega_{X}$  is an isomorphism of Hopf quasigrous between the wreath product $A\otimes_{\Psi} H$ and $X$, where $\Psi$ is the $a$-comonoidal distributive law defined in the previous theorem.
\end{theorem}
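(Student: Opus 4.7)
The plan is to show that $\omega_X$ is an isomorphism of non-associative bimonoids between the wreath product $A\ot_{\Psi}H$ (which is a Hopf quasigroup by Theorem 3.1 of \cite{RGR}, since Theorem \ref{prinprev} supplies an $a$-comonoidal distributive law) and $X$. By Definition \ref{Hopfqg}, a morphism of Hopf quasigroups is precisely a morphism of unital magmas and of comonoids, the antipode compatibility (\ref{antipode-morphism}) being automatic; so a bimonoid isomorphism immediately upgrades to an isomorphism of Hopf quasigroups.

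By Definition \ref{factq} we already know that $\omega_X$ is a comonoid morphism, and by hypothesis it is invertible, so what remains is to verify that $\omega_X$ is a morphism of unital magmas from $A\ot_{\Psi}H$ to $X$. Unitality is immediate: $\omega_X\co (\eta_A\ot \eta_H)=\mu_X\co (\eta_X\ot \eta_X)=\eta_X$, because $i_A$, $i_H$ are unital and $\eta_{A\ot_{\Psi}H}=\eta_A\ot \eta_H$. The core step is multiplicativity, $\mu_X\co (\omega_X\ot \omega_X)=\omega_X\co \mu_{A\ot_{\Psi}H}$. To establish it, I would start from $\mu_X(\omega_X(a\ot h),\omega_X(b\ot g))$ and apply (\ref{factq1}) with outer right factor $\omega_X(b\ot g)$ to rewrite it as $\mu_X(i_A(a),\mu_X(i_H(h),\omega_X(b\ot g)))$. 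A use of (\ref{factq2}) then transforms the inner term into $\mu_X(\theta_X(h\ot b),i_H(g))$. Substituting $\theta_X=\omega_X\co \Psi$ and writing $\Psi(h\ot b)=b'\ot h'$, a second application of (\ref{factq1}) splits this as $\mu_X(i_A(b'),\mu_X(i_H(h'),i_H(g)))$, which equals $\mu_X(i_A(b'),i_H(\mu_H(h',g)))$ since $i_H$ is multiplicative. Finally, applying (\ref{factq2}) in reverse with outer left factor $i_A(a)$ rebrackets the full expression as $\mu_X(\mu_X(i_A(a),i_A(b')),i_H(\mu_H(h',g)))$, which equals $\omega_X\co \mu_{A\ot_{\Psi}H}(a\ot h\ot b\ot g)$ by the multiplicativity of $i_A$ and the definition of $\omega_X$.

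Once $\omega_X$ is a morphism of non-associative bimonoids and a bijection in $\sf C$, its inverse is automatically a morphism of non-associative bimonoids, so $\omega_X$ is an isomorphism of Hopf quasigroups; in particular the wreath antipode (\ref{64}) is transported to $\lambda_X$. The main obstacle is the lack of associativity of $\mu_X$: every rebracketing of products in $X$ must be licensed by one of the factorisation identities, and the identities (\ref{factq1})--(\ref{factq2}) capture precisely the two directions in which products of $i_A$- and $i_H$-images interact with $\omega_X$. I note that the antipode-sensitive conditions (\ref{factq3})--(\ref{factq4}) are not required for multiplicativity; they were already consumed in Theorem \ref{prinprev} to certify that $\Psi$ is $a$-comonoidal.
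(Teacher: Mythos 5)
Your proposal is correct and follows essentially the same route as the paper: the paper also reduces the claim to unitality plus multiplicativity of $\omega_X$ (the comonoid part and antipode compatibility being automatic), and its chain of rewritings is exactly yours --- (\ref{factq1}), then (\ref{factq2}), then (\ref{factq1}) again via $\theta_X=\omega_X\co\Psi$, then multiplicativity of $i_H$, and finally (\ref{factq2}) with multiplicativity of $i_A$ --- only written in morphism rather than element notation. Your closing observations (that (\ref{factq3})--(\ref{factq4}) are not needed here and that a bijective bimonoid morphism is automatically a Hopf quasigroup isomorphism) are also consistent with the paper.
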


\begin{proof} To complete the proof we only need to show that $\omega_{X}$  is an isomorphism of unital magmas. Indeed, trivially $\omega_{X}\circ \eta_{A\otimes_{\Psi} H}=\eta_{X}$. On the other hand, 
\begin{itemize}
\item[ ]$\hspace{0.38cm} \mu_{X}\circ (\omega_{X}\otimes \omega_{X}) $
	
\item[ ]$= \mu_{X}\circ (i_{A}\otimes (\mu_{X}\circ (i_{H}\otimes \omega_{X})))$ {\scriptsize  ({\blue by (\ref{factq1})})}
	
\item[ ]$=\mu_{X}\circ (i_{A}\otimes (\mu_{X}\circ (\theta_{X}\otimes i_{H})))   $ {\scriptsize ({\blue by (\ref{factq2})})} 
	
\item[ ]$=\mu_{X}\circ (i_{A}\otimes (\mu_{X}\circ (i_{A}\otimes (\mu_{X}\circ (i_{H}\otimes i_{H}))))\circ (A\otimes \Psi\otimes H)    $  {\scriptsize  ({\blue by (\ref{factq1})})}

\item[ ]$= \mu_{X}\circ (i_{A}\otimes  (\omega_{X}\circ (A\otimes \mu_{H}))) \circ (A\otimes \Psi\otimes H)  $  {\scriptsize  ({\blue by the condition of monoid morphism for $i_{H}$})}

\item[ ]$=\omega _{X}\circ \mu_{A\otimes_{\Psi} H} $  {\scriptsize  ({\blue by the condition of monoid morphism for $i_{A}$ and (\ref{factq2})})}
	
\end{itemize}
\end{proof}

In the next theorem we will prove that, in the same way as in the case of Hopf algebras, every factorization of Hopf  quasigroups comes from a matched pair of Hopf quasigroups.

\begin{theorem}
\label{prin} Let $H$, $A$, $X$ be Hopf quasigroups such that the antipodes of $H$ and $A$ are isomorphisms. If $X$ factorizes as $X=AH$, there exists a matched pair of Hopf quasigroups $(A,H)$ such that $X$ is isomorphic to $A\bowtie H$ as Hopf quasigroups.
\end{theorem}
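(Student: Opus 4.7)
The plan is to combine the two preceding theorems with an explicit matched-pair decomposition of the distributive law $\Psi=\omega_{X}^{-1}\circ \theta_{X}$. By Theorem \ref{prinprev1}, $\omega_{X}\colon A\otimes_{\Psi}H\to X$ is already an isomorphism of Hopf quasigroups, so it suffices to realise the wreath product $A\otimes_{\Psi}H$ as a double cross product $A\bowtie H$ arising from a matched pair $(A,H)$ of Hopf quasigroups.

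First I would introduce the candidate actions
$$\varphi_{A}=(A\otimes \varepsilon_{H})\circ \Psi,\qquad \phi_{H}=(\varepsilon_{A}\otimes H)\circ \Psi,$$
and verify the decomposition $\Psi=(\varphi_{A}\otimes \phi_{H})\circ \delta_{H\otimes A}$. This is immediate from the comonoidality of $\Psi$ (equation (\ref{cdl1})): expanding the right-hand side gives
$$(A\otimes \varepsilon_{H}\otimes \varepsilon_{A}\otimes H)\circ (\Psi\otimes \Psi)\circ \delta_{H\otimes A}=(A\otimes \varepsilon_{H}\otimes \varepsilon_{A}\otimes H)\circ \delta_{A\otimes H}\circ \Psi=\Psi$$
after applying the counit axioms. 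Next, I would check that $(A,\varphi_{A})$ is a left $H$-quasimodule comonoid and $(H,\phi_{H})$ is a right $A$-quasimodule comonoid: unitality (\ref{uq}) follows from (\ref{dl4}) and its right analogue from (\ref{dl3}); the counit and coproduct compatibilities (\ref{eq}), (\ref{dq}) follow from (\ref{cdl1}) and (\ref{cdl2}) by projecting through the appropriate counits; and the quasimodule axioms (\ref{pq}) are obtained from (\ref{adl1})--(\ref{adl4}) by post-composing with $A\otimes \varepsilon_{H}$ or $\varepsilon_{A}\otimes H$.

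Then I would verify the nine matched-pair identities (\ref{d1})--(\ref{d9}) of Example \ref{1}. Conditions (\ref{d1}) and (\ref{d2}) are the unit compatibilities (\ref{dl3}), (\ref{dl4}) projected to $A$ and $H$; (\ref{d3}) is a direct consequence of the comonoidality of $\Psi$ just as in the previous paragraph. Identities (\ref{d4}) and (\ref{d7}) follow from the distributive-law axioms (\ref{dl1}) and (\ref{dl2}) by projecting via $A\otimes \varepsilon_{H}$ and $\varepsilon_{A}\otimes H$ respectively; here the hypothesis that the antipodes of $A$ and $H$ are isomorphisms is used to replace (\ref{dl1}) and (\ref{dl2}) by the simpler forms (\ref{dl1-1}) and (\ref{dl2-1}) before projecting. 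Finally, (\ref{d5}), (\ref{d6}), (\ref{d8}) and (\ref{d9}) are the $a$-comonoidal identities (\ref{adl1})--(\ref{adl4}) rewritten in terms of $\varphi_{A}$, $\phi_{H}$ and $\Psi$ using the decomposition established in the previous step.

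Having produced a matched pair $(A,H)$, Example \ref{1} supplies a double cross product Hopf quasigroup $A\bowtie H$ whose product, coproduct, unit, counit and antipode coincide by construction with those of the wreath product $A\otimes_{\Psi}H$. Composing with $\omega_{X}$ from Theorem \ref{prinprev1} yields $X\cong A\bowtie H$ as Hopf quasigroups. The main obstacle I expect is the bookkeeping of the nine matched-pair identities together with the quasimodule axioms; each individually reduces to a counit projection of an identity already established for $\Psi$, but the number of verifications is substantial and the antipodes $\lambda_{A}$ and $\lambda_{H}$ that appear in (\ref{dl1}), (\ref{dl2}) and in (\ref{d4})--(\ref{d9}) must be tracked carefully.
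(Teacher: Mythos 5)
Your overall route is the same as the paper's: define $\varphi_{A}=(A\otimes \varepsilon_{H})\circ \Psi$ and $\phi_{H}=(\varepsilon_{A}\otimes H)\circ \Psi$, recover $\Psi=(\varphi_{A}\otimes \phi_{H})\circ \delta_{H\otimes A}$ from comonoidality, check the matched-pair identities (\ref{d1})--(\ref{d9}) by counit projections of the distributive-law and $a$-comonoidal identities, and conclude via Theorem \ref{prinprev1}. The attributions you give for (\ref{d1})--(\ref{d9}) agree with the paper's.

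There is, however, one step that does not go through as you state it. You propose to show only that $(A,\varphi_{A})$ is a left $H$-\emph{quasimodule} comonoid and $(H,\phi_{H})$ a right $A$-\emph{quasimodule} comonoid, deriving the quasimodule axioms (\ref{pq}) from (\ref{adl1})--(\ref{adl4}). But the matched-pair framework of Example \ref{1} (that is, the hypotheses of \cite[Corollary 5.4]{our2}) requires $(A,\varphi_{A})$ and $(H,\phi_{H})$ to be \emph{module} comonoids, i.e.\ to satisfy the genuine associativity condition (\ref{pqmod}), $\varphi_{A}\circ (H\otimes \varphi_{A})=\varphi_{A}\circ (\mu_{H}\otimes A)$, and its right-handed analogue for $\phi_{H}$. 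This is strictly stronger than (\ref{pq}), and it is not a consequence of the $a$-comonoidal identities you cite. The paper obtains it from (\ref{dl2-1}) together with (\ref{mu-eps}):
\begin{equation*}
\varphi_{A}\circ (H\otimes \varphi_{A})=(A\otimes (\varepsilon_{H}\circ \mu_{H}))\circ ( \Psi\otimes H)\circ (H\otimes \Psi)=\varphi_{A}\circ (\mu_{H}\otimes A),
\end{equation*}
and symmetrically for $\phi_{H}$ from (\ref{dl1-1}); this is precisely where the hypothesis that the antipodes of $H$ and $A$ are isomorphisms enters (to upgrade (\ref{dl1}), (\ref{dl2}) to (\ref{dl1-1}), (\ref{dl2-1})), not only in the verification of (\ref{d4}) and (\ref{d7}) as you suggest. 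The repair is routine and uses the same counit-projection technique you already employ, but as written your verification establishes a weaker structure than the one the double cross product construction requires.
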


\begin{proof} Let $\Psi$ be the morphism defined in Theorem \ref{prinprev}. Define the actions by 
$$\varphi_{A}=(A\otimes \varepsilon_{H})\circ \Psi, \; \;\;\; \phi_{H}=(\varepsilon_{A}\otimes H)\circ \Psi.$$ 

Then, $(A,H)$ is a matched pair of Hopf quasigroups. Indeed, $(A, \varphi_{A})$ is a left $H$-module because, by (\ref{dl4}) and (\ref{eta-eps}) for $H$,  we have 
$$\varphi_{A}\circ (\eta_{H}\otimes A)=(A\otimes \varepsilon_{H})\circ \Psi\circ (\eta_{H}\otimes A)=A\otimes (\varepsilon_{H}\circ \eta_{H})=id_{A}, $$
and, by (\ref{mu-eps}) for $H$ and (\ref{dl2-1}), 
$$\varphi_{A}\circ (H\otimes \varphi_{A})=(A\ot (\varepsilon_{H}\circ \mu_{H}))\co ( \Psi\ot H)\co (H\ot \Psi)=\varphi_{A}\co (\mu_{H}\ot A).$$

Also, using that $\psi$ is a comonoid morphism, the naturality of $c$ and the properties of the counits, we have that $(A, \varphi_{A})$ is a left $H$-module comonoid. Similarly we can prove that $(H, \phi_{H})$ is a right $H$-module comonoid.

On the other hand, (\ref{d1}) follows from (\ref{dl3}) and (\ref{d2})  from (\ref{dl4}). The identity (\ref{d3}) is a consequence of the condition of comonoid morphism of $\Psi$ and the properties of the counits. On the other hand, (\ref{d4}) follows by (\ref{dl1}) (or (\ref{dl1-1}) because the antipodes are isomorphisms). The equality (\ref{d5}) follows by (\ref{adl1}) and (\ref{d6}) can be proved thanks to (\ref{adl2}). Similarly to (\ref{d4}), (\ref{d7}) follows by (\ref{dl2}) (or (\ref{dl2-1}) because the antipodes are isomorphims). Finally, (\ref{d8}) is a consequence of (\ref{adl4}) and (\ref{d7}) follows by (\ref{adl3}).

Note that, by the condition of comonoid morphism of $\Psi$, the naturality of $c$ and the properties of the units,  we have that 
$$\Psi=(\varphi_{A}\otimes \phi_{H})\circ \delta_{H\otimes A}.$$

Therefore, $A\otimes_{\Psi}H=A\bowtie H$ and, by Theorem \ref{prinprev1}, $X$ is isomorphic to $A\bowtie H$ as Hopf quasigroups.
\end{proof}

\begin{theorem}
\label{prin-cor} Let $H$, $A$, $X$ be Hopf quasigroups such that the antipodes of $H$ and $A$ are isomorphisms. Then, $X$ factorizes as $X=AH$ iff there exists a matched pair of Hopf quasigroups $(A,H)$ such that $X$ is isomorphic to $A\bowtie H$ as Hopf quasigroups.
\end{theorem}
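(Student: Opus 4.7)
The plan is to observe that this theorem is essentially a corollary packaging two pieces that have already been established: the forward direction $(\Rightarrow)$ is precisely the content of Theorem \ref{prin}, so nothing further is needed there. Only the converse direction $(\Leftarrow)$ requires argument, and the idea is to transport the canonical factorization of $A\bowtie H$ exhibited in Example \ref{ex1fact} along a Hopf quasigroup isomorphism $f\colon A\bowtie H\to X$.

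First I would recall from Example \ref{ex1fact} that $A\bowtie H$ itself factorizes as $AH$ via the Hopf subquasigroup inclusions $j_{A}=A\otimes \eta_{H}$ and $j_{H}=\eta_{A}\otimes H$, which give $\omega_{A\bowtie H}=id_{A\otimes H}$ (trivially an isomorphism), $\theta_{A\bowtie H}=\Psi$, and for which the identities \eqref{factq1}--\eqref{factq4} have been verified explicitly. Then, given any Hopf quasigroup isomorphism $f\colon A\bowtie H\to X$, I would define
$$i_{A}=f\circ j_{A}\colon A\to X, \qquad i_{H}=f\circ j_{H}\colon H\to X.$$
Being compositions of Hopf quasigroup morphisms that are split monic (with retractions $(A\otimes \varepsilon_{H})\circ f^{-1}$ and $(\varepsilon_{A}\otimes H)\circ f^{-1}$), they exhibit $A$ and $H$ as Hopf subquasigroups of $X$.

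Using multiplicativity of $f$, the associated factorization morphisms satisfy
$$\omega_{X}=\mu_{X}\circ (i_{A}\otimes i_{H})=f\circ \mu_{A\bowtie H}\circ (j_{A}\otimes j_{H})=f\circ \omega_{A\bowtie H}=f,$$
which is an isomorphism, and similarly $\theta_{X}=f\circ \Psi$. Each of the four identities \eqref{factq1}--\eqref{factq4} for $X$ is then obtained by post-composing the corresponding identity for $A\bowtie H$ with $f$ and using that $f$ preserves the product, coproduct, unit, counit, and by \eqref{antipode-morphism} also the antipode (so $\lambda_{X}\circ f=f\circ \lambda_{A\bowtie H}$). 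Since each side of \eqref{factq1}--\eqref{factq4} has the form $\mu_{X}\circ(-)$ with the parenthesized expression built from $i_{A}, i_{H}, \omega_{X}, \theta_{X}, \lambda_{X}$ and identities, the transfer is immediate.

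The only potential obstacle is bookkeeping: verifying that each of the four identities transfers cleanly under $f$. This is not a serious obstacle, since multiplicativity of $f$ combined with the above formulas $\omega_{X}=f\circ \omega_{A\bowtie H}$ and $\theta_{X}=f\circ \theta_{A\bowtie H}$ allows one to rewrite every displayed equation for $X$ as the image under $f$ of the corresponding equation for $A\bowtie H$. The substance of the theorem has therefore already been carried out in Theorem \ref{prin}, Example \ref{ex1fact}, and the wreath-product construction of Example \ref{1}; the present statement merely packages them as an equivalence.
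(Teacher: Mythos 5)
Your proposal is correct and matches the paper's proof, which likewise disposes of the theorem by citing Theorem \ref{prin} for the forward direction and Example \ref{ex1fact} for the converse. The paper leaves the transport of the factorization of $A\bowtie H$ along the isomorphism $f$ implicit, whereas you spell it out; the extra detail is accurate but the route is the same.
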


\begin{proof}
The proof follows by Theorem \ref{prin} and Example \ref{ex1fact}.
\end{proof}

Of course, the definitions and results of this section admit a dual version in the Hopf coquasigroup setting. We will state everything below, omitting the proofs, since these follow by duality from those done in the context of Hopf quasigroups.

\begin{definition}
\label{cfactq}
{\rm Let $Y$ be a Hopf coquasigroup in {\sf C}. Let $D$, $B$ be Hopf coquasigroups such that there exist Hopf coquasigroup epimorphisms  $p_{D}:Y\rightarrow D$ and   $p_{B}:Y\rightarrow B$. Let  $u_{Y}$ and $v_{Y}$ be the morphisms defined by 
$$u_{Y}=(p_{D}\otimes p_{B})\circ \delta_{Y}:Y\rightarrow D\otimes B, \;\;\;v_{Y}=(p_{B}\otimes p_{D})\circ \delta_{Y}:Y\rightarrow B\otimes D.$$
		
We will say that  $Y$ cofactorizes as $Y=D\bullet B$ if $u_{Y}$ is an isomorphism and the following identities
\begin{equation}\label{cfactq1}
(Y\otimes u_{Y})\circ \delta_{Y}=(((Y\otimes p_{D})\circ\delta_{Y})\otimes p_{B})\circ \delta_{Y},
\end{equation}
\begin{equation}\label{cfactq2}
(u_{Y}\otimes Y)\circ \delta_{Y}=(p_{D}\otimes ((p_{B}\otimes Y)\circ\delta_{Y}))\circ \delta_{Y},
\end{equation}
\begin{equation}\label{cfactq3}
(Y\otimes ((\lambda_{B}\otimes \lambda_{D})\circ v_{Y}))\circ \delta_{Y}=(((Y\otimes (\lambda_{B}\circ p_{B}))\circ\delta_{Y})\otimes (\lambda_{D}\circ p_{D}))\circ \delta_{Y},
\end{equation}
\begin{equation}\label{cfactq4}
(((\lambda_{B}\otimes \lambda_{D})\circ v_{Y})\otimes Y)\circ \delta_{Y}=((\lambda_{B}\circ p_{B})\otimes (((\lambda_{D}\circ p_{D})\otimes Y)\circ\delta_{Y}))\circ \delta_{Y}
\end{equation}
hold.
		
Note that, if $D$ and $B$ are finite we can remove the antipodes in (\ref{cfactq3}) and (\ref{cfactq4}). Then these identities became  in 
\begin{equation}\label{cfactq31}
(Y\otimes  v_{Y})\circ \delta_{Y}=(((Y\otimes  p_{B})\circ\delta_{Y})\otimes p_{D})\circ \delta_{Y},
\end{equation}
\begin{equation}\label{cfactq41}
(v_{Y}\otimes Y)\circ \delta_{Y}=( p_{B}\otimes ((p_{D}\otimes Y)\circ\delta_{Y}))\circ \delta_{Y}.
\end{equation}
		
Finally, note that $u_{Y}$ and $v_{Y}$ are monoid morphisms because $p_{D}$, $p_{B}$ are monoid morphisms and (\ref{mu-eps}) and (\ref{delta-mu}) holds for the Hopf coquasigroup $Y$. Obviously if $u_{Y}$ is a monoid isomorphism, its dual is a comonoid isomorphism.
}
\end{definition}

\begin{example}
{\rm 
\label{ex1factc}  Suppose that $(B, D)$ is a comatched pair of Hopf coquasigroups.
By Example \ref{1c},   the double cross coproduct $D\infty B$ is a Hopf coquasigroup.
The morphisms $p_{B}=\varepsilon_{D}\otimes B:D\infty B\rightarrow B$ and $p_{D}=D\otimes \varepsilon_{B}:D\infty B\rightarrow D$ are morphisms of Hopf coquasigroups such that 
$$u_{D\infty B}=id_{D\otimes B},\;\;\; v_{D\infty B}=\Omega$$
and the equalities (\ref{cfactq1})-(\ref{cfactq4}) hold. Therefore, any double cross coproduct of Hopf coquasigroups induces an example of cofactorization. 
}
\end{example}

\begin{theorem}
\label{cprinprev} Let $D$, $B$ and $Y$ be Hopf coquasigroups such that there exist Hopf coquasigroup epimorphisms  $p_{D}:Y\rightarrow D$ and   $p_{B}:Y\rightarrow B$. If  $Y$ cofactorises as $Y=D\bullet B$, the morphism 
$$\Omega=v_{Y}\circ u_{Y}^{-1}:D\otimes B\rightarrow B\otimes D$$
is a monoidal codistributive law of $D$ over $B$. Moreover, if the antipodes of $D$ and $B$ are isomorphisms $\Omega$ is an $a$-monoidal codistributive law of $D$ over $B$.
\end{theorem}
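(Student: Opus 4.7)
The entire proof proceeds by dualizing Theorem \ref{prinprev}: I would reverse arrows, interchange products with coproducts, replace $\omega_{X}$ and $\theta_{X}$ by $u_{Y}$ and $v_{Y}$, swap the inclusion morphisms $i_{A}, i_{H}$ for the projection epimorphisms $p_{D}, p_{B}$, and exchange $\Psi = \omega_{X}^{-1}\circ \theta_{X}$ for $\Omega = v_{Y}\circ u_{Y}^{-1}$. Since $u_{Y}$ is assumed to be an isomorphism, precomposing or postcomposing by $u_{Y}$ will be the standard manipulation to reduce each identity to be checked into a statement involving only $v_{Y}$, $u_{Y}$, $p_{D}$ and $p_{B}$, which can then be expanded using (\ref{cfactq1})-(\ref{cfactq4}).

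First, I would check that $\Omega$ is a codistributive law of $D$ over $B$ in the sense of Definition \ref{dlc}. Conditions (\ref{dl3c}) and (\ref{dl4c}) are immediate from the counit compatibility of $p_{D}, p_{B}$, the definition of $v_{Y}$, the counit axiom for $Y$, and the fact that $u_{Y}^{-1}$ is a comonoid morphism (dual of the corresponding property for monoid isomorphisms). For (\ref{dl1c}) and (\ref{dl2c}), I would precompose with $u_{Y}$; after cancelling $u_{Y}^{-1}$ the identities reduce to statements about $v_{Y}$ and $\delta_{Y}$ that follow from (\ref{cfactq3}), (\ref{cfactq4}), the comultiplicativity of $p_{D}, p_{B}$, and the identity (\ref{antipode-morphism}) applied to $p_{D}, p_{B}$.

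Next, $\Omega$ is a monoid morphism because $u_{Y}$ and $v_{Y}$ are monoid morphisms (as noted at the end of Definition \ref{cfactq}) and the inverse of a monoid isomorphism is a monoid morphism; thus (\ref{cdl1c}) and (\ref{cdl2c}) hold, so $\Omega$ is monoidal. When the antipodes of $D$ and $B$ are isomorphisms, I would then verify the four $a$-monoidal identities (\ref{adl1c})-(\ref{adl4c}). In each case I compose with $u_{Y}$ on the appropriate tensor factor, unfold $v_{Y}$ and $u_{Y}$ by their definitions, apply the cofactorization identities (\ref{cfactq1})-(\ref{cfactq4}) to relocate $p_{D}$ and $p_{B}$, and then invoke the Hopf coquasigroup antipode axioms (\ref{clH}) and (\ref{crH}) for $Y$ together with the compatibility (\ref{antipode-morphism}) of $p_{D}, p_{B}$ with the antipodes. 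The final step is to use again that $p_{D}$ and $p_{B}$ are comonoid morphisms to collapse the remaining $\varepsilon_{Y}$ factor into $\varepsilon_{D}$ or $\varepsilon_{B}$, recovering the right-hand side of (\ref{adl1c})-(\ref{adl4c}).

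The main obstacle is purely bookkeeping: the dual calculations involve many components arising from iterated coproducts, and one must carefully track in which tensor slot each application of (\ref{cfactq1})-(\ref{cfactq4}) is performed and through which factor the antipode is threaded. No genuinely new conceptual ingredient is needed beyond the patience to mirror the diagrammatic manipulations of Theorem \ref{prinprev} line by line, which is precisely why the statement can be made with the proof omitted by invoking duality.
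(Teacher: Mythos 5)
Your proposal is correct and matches the paper's approach exactly: the paper explicitly omits the proofs of the Hopf coquasigroup statements, asserting that they "follow by duality from those done in the context of Hopf quasigroups," and your argument is precisely that dualization of Theorem \ref{prinprev}, carried out with the right substitutions ($\omega_{X},\theta_{X},i_{A},i_{H}$ replaced by $u_{Y},v_{Y},p_{D},p_{B}$ and the cofactorization identities (\ref{cfactq1})--(\ref{cfactq4}) playing the role of (\ref{factq1})--(\ref{factq4})). Nothing further is needed.
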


\begin{theorem}
\label{cprinprev1}
Let $D$, $B$ and $Y$ be Hopf coquasigroups such that there exist Hopf coquasigroup epimorphisms  $p_{D}:Y\rightarrow D$, $p_{B}:Y\rightarrow B$ and suppose that the antipodes of $D$ and $B$ are isomorphisms. Assume that $Y$ cofactorises as $Y=D\bullet B$. Then, $u_{Y}$  is an isomorphism of Hopf coquasigrous between $Y$ and the  the wreath coproduct $D\otimes_{\Omega} B$, where $\Omega$ is the $a$-monoidal codistributive law defined in the previous theorem.
\end{theorem}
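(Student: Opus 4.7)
The proof parallels that of Theorem \ref{prinprev1} and is obtained by dualisation. Since the cofactorisation assumption gives that $u_{Y}$ is an isomorphism in $\sf C$, and the remark at the end of Definition \ref{cfactq} ensures that $u_{Y}$ is a morphism of monoids, we already have a monoid isomorphism between $Y$ and $D\otimes_{\Omega} B$ (both carrying the same tensor-product monoid structure $\mu_{D\otimes B}$, with unit $\eta_{D}\otimes\eta_{B}$). Since a morphism of Hopf coquasigroups is nothing more than a morphism of monoids and of counital comagmas, it only remains to check that $u_{Y}$ is a morphism of counital comagmas from $Y$ to $D\otimes_{\Omega} B$.

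The counit condition $\varepsilon_{D\otimes_{\Omega} B}\circ u_{Y}=\varepsilon_{Y}$ is immediate: using $\varepsilon_{D\otimes_{\Omega} B}=\varepsilon_{D}\otimes\varepsilon_{B}$, the definition $u_{Y}=(p_{D}\otimes p_{B})\circ\delta_{Y}$, the counitality of $p_{D}$ and $p_{B}$ (so that $\varepsilon_{D}\circ p_{D}=\varepsilon_{Y}=\varepsilon_{B}\circ p_{B}$), and the counit axiom for $\delta_{Y}$, the composite collapses to $\varepsilon_{Y}$.

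The substantive step is coproduct compatibility,
$$\delta_{D\otimes_{\Omega} B}\circ u_{Y}=(u_{Y}\otimes u_{Y})\circ\delta_{Y},$$
where $\delta_{D\otimes_{\Omega} B}=(D\otimes\Omega\otimes B)\circ(\delta_{D}\otimes\delta_{B})$. The plan is to run the calculation dual to the chain in the proof of Theorem \ref{prinprev1} that established $\mu_{X}\circ(\omega_{X}\otimes\omega_{X})=\omega_{X}\circ\mu_{A\otimes_{\Psi} H}$. The dictionary replaces $\mu_{X}$ by $\delta_{Y}$, $\omega_{X}$ by $u_{Y}$, $\theta_{X}$ by $v_{Y}$, the inclusions $i_{A},i_{H}$ by the projections $p_{D},p_{B}$, the factorisation identities (\ref{factq1})-(\ref{factq2}) by the cofactorisation identities (\ref{cfactq1})-(\ref{cfactq2}), and the monoid-morphism conditions on $i_{A},i_{H}$ by the comonoid-morphism conditions on $p_{D},p_{B}$ (i.e.\ $(p_{D}\otimes p_{D})\circ\delta_{Y}=\delta_{D}\circ p_{D}$ and similarly for $p_{B}$). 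The role played by $\theta_{X}=\omega_{X}\circ\Psi$ in the original argument is here taken by $v_{Y}=\Omega\circ u_{Y}$, which is simply a restatement of the definition $\Omega=v_{Y}\circ u_{Y}^{-1}$ from Theorem \ref{cprinprev}. Applying these substitutions step by step reduces $(u_{Y}\otimes u_{Y})\circ\delta_{Y}$ to $(D\otimes\Omega\otimes B)\circ(\delta_{D}\otimes\delta_{B})\circ u_{Y}=\delta_{D\otimes_{\Omega} B}\circ u_{Y}$.

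The only real obstacle is the bookkeeping when dualising a product calculation to a coproduct calculation: because $Y$ is not coassociative, iterated copies of $\delta_{Y}$ cannot be moved freely, and the admissible rearrangements are precisely those encoded in (\ref{cfactq1})-(\ref{cfactq2}) (which is why they were imposed). Once coproduct compatibility is verified, $u_{Y}$ is a morphism of counital comagmas, hence a Hopf coquasigroup isomorphism; compatibility with the antipodes is automatic, as recalled after Definition \ref{Hopfcqg}.
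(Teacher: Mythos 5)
Your proposal is correct and takes essentially the same route as the paper, which omits this proof entirely on the grounds that it follows by duality from Theorem \ref{prinprev1}. Your dualisation dictionary --- the monoid-morphism part already supplied by the remark at the end of Definition \ref{cfactq}, the remaining comagma compatibility obtained by dualising the chain of equalities in Theorem \ref{prinprev1} using (\ref{cfactq1}), (\ref{cfactq2}), the comonoid-morphism property of $p_{D}$, $p_{B}$, and $v_{Y}=\Omega\circ u_{Y}$ --- is exactly the intended argument.
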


\begin{theorem}
\label{cprin} Let $D$, $B$, $Y$ be Hopf coquasigroups such that the antipodes of $D$ and $B$ are isomorphisms. If $Y$ cofactorises as $Y=D\bullet B$, there exists a comatched pair of Hopf coquasigroups $(B,D)$ such that $Y$ is isomorphic to $D\infty B$ as Hopf coquasigroups.
\end{theorem}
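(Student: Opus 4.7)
The plan is to mirror, on the dual side, the argument used for Theorem \ref{prin}, substituting Theorems \ref{cprinprev} and \ref{cprinprev1} for their Hopf quasigroup analogues. First, I would apply Theorem \ref{cprinprev} to the cofactorization $Y = D \bullet B$ to extract the $a$-monoidal codistributive law $\Omega = v_Y \circ u_Y^{-1} : D \otimes B \to B \otimes D$.

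Second, dualizing the definitions $\varphi_A = (A \otimes \varepsilon_H) \circ \Psi$ and $\phi_H = (\varepsilon_A \otimes H) \circ \Psi$ from the proof of Theorem \ref{prin}, I would set
\begin{equation*}
\rho_B = \Omega \circ (\eta_D \otimes B) : B \to B \otimes D, \qquad r_D = \Omega \circ (D \otimes \eta_B) : D \to B \otimes D.
\end{equation*}
I would then check that $(B, \rho_B)$ is a right $D$-comodule monoid and $(D, r_D)$ is a left $B$-comodule monoid. The counit conditions follow from (\ref{dl3c}) and (\ref{dl4c}); coassociativity of $\rho_B$ is a consequence of (\ref{dl1-1c}) and of $r_D$ of (\ref{dl2-1c}) (these antipode-free formulations are available because the antipodes of $D$ and $B$ are isomorphisms); compatibility with the monoid structures comes from the monoid morphism property of $\Omega$ combined with (\ref{cdl2c}).

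Third, I would verify the nine comatched pair identities (\ref{d1c})-(\ref{d9c}). The first two are immediate from (\ref{dl3c}) and (\ref{dl4c}); (\ref{d3c}) follows from the monoid morphism property of $\Omega$ and the unit axioms. Identities (\ref{d4c}) and (\ref{d7c}) are obtained from the codistributive law in its strong forms (\ref{dl1-1c}) and (\ref{dl2-1c}) by composing with counits and invoking the comonoid morphism property of $\Omega$ and naturality of $c$, just as (\ref{d4}) and (\ref{d7}) were deduced from (\ref{dl1}) and (\ref{dl2}) in the quasigroup case. The identities (\ref{d5c}), (\ref{d6c}), (\ref{d8c}), (\ref{d9c}) are extracted, respectively, from the $a$-monoidal axioms (\ref{adl1c})-(\ref{adl4c}) by inserting counits on the appropriate legs and using that $\Omega$ is a comonoid morphism.

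Finally, the monoid morphism property of $\Omega$ together with the unit axioms yields
\begin{equation*}
\Omega = \mu_{B \otimes D} \circ (r_D \otimes \rho_B),
\end{equation*}
so the wreath coproduct $D \otimes_{\Omega} B$ coincides with the double cross coproduct $D \infty B$ associated with the comatched pair $(B, D)$, and Theorem \ref{cprinprev1} then furnishes an isomorphism $u_Y : Y \to D \infty B$ of Hopf coquasigroups. The main obstacle is purely bookkeeping: confirming that each of the nine comatched pair conditions is genuinely the formal dual of the corresponding step in Theorem \ref{prin}, and tracking where the hypothesis that $\lambda_D$ and $\lambda_B$ are isomorphisms is used (namely, to move between (\ref{dl1c})-(\ref{dl2c}) and (\ref{dl1-1c})-(\ref{dl2-1c}), and to invoke Theorem \ref{cprinprev1}). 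No essentially new idea is required beyond this dualization.
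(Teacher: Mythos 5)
Your proposal is correct and takes exactly the paper's approach: the paper omits this proof entirely, stating that it follows by dualizing Theorem \ref{prin}, and your definitions $\rho_B=\Omega\circ(\eta_D\otimes B)$, $r_D=\Omega\circ(D\otimes\eta_B)$ and the subsequent verification are precisely that dualization. The only quibbles are label swaps in the bookkeeping you yourself flag as the main risk: coassociativity of $\rho_B$ (a right $D$-coaction, hence involving $\delta_D$) uses (\ref{dl2-1c}) while that of $r_D$ uses (\ref{dl1-1c}), and the pairing of (\ref{d5c})--(\ref{d9c}) with (\ref{adl1c})--(\ref{adl4c}) is crossed for the last two in the quasigroup prototype ((\ref{d8}) from (\ref{adl4}), (\ref{d9}) from (\ref{adl3})) rather than ``respectively''.
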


\begin{theorem}
\label{cprin-cor} Let $D$, $B$, $Y$ be Hopf coquasigroups such that the antipodes of $D$ and $B$ are isomorphisms. Then, $Y$ cofactorises as $Y=D\bullet B$ iff there exists a comatched pair of Hopf coquasigroups $(B,D)$ such that $Y$ is isomorphic to $D\infty B$ as Hopf coquasigroups.
\end{theorem}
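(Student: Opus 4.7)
The plan is to mirror exactly the argument used to derive Theorem~\ref{prin-cor} from Theorem~\ref{prin} and Example~\ref{ex1fact}, now in the dualized setting. That is, the statement is a two-implication equivalence, so I would split the proof cleanly into the ``only if'' part and the ``if'' part, and for each direction I would simply invoke the corresponding dual ingredient already established earlier in the section.

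For the ``only if'' direction, I assume $Y$ cofactorises as $Y=D\bullet B$. Since the antipodes of $D$ and $B$ are isomorphisms, Theorem~\ref{cprin} applies and produces a comatched pair of Hopf coquasigroups $(B,D)$ together with an isomorphism of Hopf coquasigroups between $Y$ and the double cross coproduct $D\infty B$. This is precisely the conclusion required in this direction, so no further work is needed beyond the direct citation.

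For the ``if'' direction, I assume that there is a comatched pair $(B,D)$ of Hopf coquasigroups and an isomorphism of Hopf coquasigroups $f:Y\to D\infty B$. Example~\ref{ex1factc} shows that $D\infty B$ itself cofactorises as $D\infty B=D\bullet B$, with $p_{B}=\varepsilon_{D}\otimes B$ and $p_{D}=D\otimes \varepsilon_{B}$, the map $u_{D\infty B}$ being the identity and $v_{D\infty B}=\Omega$, and the identities \eqref{cfactq1}--\eqref{cfactq4} holding. The only thing left is to transport this cofactorization structure along the isomorphism $f$: set $p_{D}^{Y}=p_{D}\circ f$ and $p_{B}^{Y}=p_{B}\circ f$. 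These are Hopf coquasigroup epimorphisms (composition of an epimorphism with an isomorphism of Hopf coquasigroups), and because $f$ is a comonoid isomorphism the induced morphisms $u_{Y}=(p_{D}^{Y}\otimes p_{B}^{Y})\co \delta_{Y}$ and $v_{Y}=(p_{B}^{Y}\otimes p_{D}^{Y})\co \delta_{Y}$ satisfy $u_{Y}=u_{D\infty B}\co f=f$ and $v_{Y}=v_{D\infty B}\co f=\Omega\co f$. In particular $u_{Y}$ is an isomorphism, and the identities \eqref{cfactq1}--\eqref{cfactq4} for $Y$ follow from the corresponding identities for $D\infty B$ by pre/post-composing with the comonoid isomorphism $f$ (and using \eqref{antipode-morphism} for $f$ to handle the antipodes in \eqref{cfactq3} and \eqref{cfactq4}).

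I do not anticipate a genuine obstacle, since all the hard content sits in Theorem~\ref{cprin} and Example~\ref{ex1factc}; the only mildly delicate bookkeeping point is verifying that cofactorization is transported along isomorphisms of Hopf coquasigroups, for which one must carefully use the comonoid-morphism property of $f$ and the compatibility of $f$ with antipodes. Once this transport lemma is noted, the two implications close immediately.
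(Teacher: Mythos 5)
Your proposal is correct and follows essentially the same route as the paper: the paper proves Theorem \ref{prin-cor} by combining Theorem \ref{prin} with Example \ref{ex1fact}, and obtains Theorem \ref{cprin-cor} by dualizing, which is exactly your citation of Theorem \ref{cprin} and Example \ref{ex1factc}. Your explicit remark about transporting the cofactorization structure along the isomorphism $f$ only makes precise a step the paper leaves implicit, so there is no substantive difference.
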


\begin{example}
{\rm  Let $S_{3}=\{\sigma_{0}, \sigma_{1}, \sigma_{2}, \sigma_{3}, \sigma_{4}, \sigma_{5}\}$ be the nonabelian group where $\sigma_{0}$ is  the identity, $o(\sigma_{1})=o(\sigma_{2})=o(\sigma_{3})=2$ and $o(\sigma_{4})=o(\sigma_{5})=3$. Let $u$ be an additional element such that $u^2=1$. Then, by  \cite[Theorem 1]{Chein}, the set 
$$L=M(S_{3},2)=\{\sigma_{i}u^{\alpha}\;; \; \alpha=0,1\}$$
is an I.P. loop where the product is defined by 
$$\sigma_{i}u^{\alpha}\bullet \;\sigma_{j}u^{\beta}=(\sigma_{i}^{\nu}\sigma_{j}^{\mu})^{\nu}u^{\alpha +\beta},\;\;\; \nu=(-1)^{\beta}, \; \mu=(-1)^{\alpha +\beta}$$
and the inverse by 
$$(\sigma_{i}u^{\alpha})^{-1}=\sigma_{i}^{(-1)^{\alpha +1}}u^{\alpha}.$$

Let ${\Bbb F}$ be a field such that Char(${\Bbb F}$)$\neq 2$ and denote the tensor product
over ${\Bbb F}$ as $\ot$. By Examples \ref{exaHq}, we have that $A={\Bbb F}L$ is a cocommutative Hopf quasigroup. 

On the other hand, let $H_{4}$ be the $4$-dimensional Taft Hopf algebra. This Hopf algebra is the smallest non-commutative, non-cocommutative Hopf algebra. The basis of $H_{4}$ is $\{1,x,y,w=xy\}$ and the multiplication table is defined by 
\begin{center}
\begin{tabular}{|c|c|c|c|c|}
\hline  $\;$ & $x$ & $y$ & $w$   \\
\hline  $ x$ &  $1$ & $w$ & $y$ \\
\hline  $ y$ &  $-w$ & $0$ & $0$ \\
\hline  $ w$ &  $-y$ & $0$ & $0$ \\
\hline
\end{tabular}
\end{center}

The coproduct of $H_{4}$ is given by 
$$ \delta_{H_{4}}(x)=x\ot x,\; \delta_{H_{4}}(y)=y\ot x +1\ot y,\; \delta_{H_{4}}(w)=w\ot 1 +x\ot w,$$
$$\varepsilon_{H_{4}}(x)=1_{\Bbb F},\; \varepsilon_{H_{4}}(y)=\varepsilon_{H_{4}}(w)=0, $$
and the antipode $\lambda_{H_{4}}$ is described by 
$$\l \lambda_{H_{4}}(x)=x,\; \lambda_{H_{4}}(y)=w,\; \lambda_{H_{4}}(w)=-y.$$

Following \cite[Example 4.12]{our2} the morphism $\tau:A\ot H_{4}\rightarrow {\Bbb F}$, defined by 
$$\tau (\sigma_{i}u^{\alpha}\ot z)=\left\{ \begin{array}{ccc} 1 & {\rm if} &
z=1 \\
(-1)^{\alpha} & {\rm if} &
z=x \\ 
0 & {\rm if} &
z=y, w 
\end{array}\right.$$
is a skew pairing such that $\tau=\tau^{-1}$.  Then, by  \cite[Proposition 5.2]{our2}, $(A, \varphi_{A})$, where 
$$\varphi_A=(\tau\ot A\ot \tau)\co (A\ot H_{4}\ot \delta_{A}\ot H_{4})\co \delta_{A\ot H_{4}}\co c_{H_{4},A},$$ 
is a left $H_{4}$-module comonoid  and  $(H_{4}, \phi_{H_{4}})$, where 
$$\phi_{H_{4}}=(\tau\ot H_{4}\ot \tau)\co (A\ot H_{4}\ot c_{A,H_{4}}\ot H_{4})\co (A\ot H_{4}\ot A\ot \delta_{H_{4}})\co \delta_{A\ot H_{4}}\co c_{H_{4},A},$$ 
is a right  $H$-module comonoid. Moreover, by \cite[Corollary 5.6]{our2}, the pair $(A, H_{4})$ is a matched pair of Hopf quasigroups.  More concretely, 
$$\varphi_A (1\otimes \sigma_{i}u^{\alpha} )=\sigma_{i}u^{\alpha}, \;\; \varphi_A (x\otimes \sigma_{i}u^{\alpha} )=\sigma_{i}u^{\alpha},\;\; \varphi_A (y\otimes \sigma_{i}u^{\alpha} )=\varphi_A (w\otimes \sigma_{i}u^{\alpha} )=0,$$
and 
$$\phi_{H_{4}}(1\otimes \sigma_{i}u^{\alpha} )=1, \;\;\phi_{H_{4}}(x\otimes \sigma_{i}u^{\alpha} )=x, \;\;\phi_{H_{4}}(y\otimes \sigma_{i}u^{\alpha} )=(-1)^{\alpha}y, \;\; \phi_{H_{4}}(w\otimes \sigma_{i}u^{\alpha} )=(-1)^{\alpha}w.$$

Then, 
$$\Psi (1\otimes \sigma_{i}u^{\alpha} )=\sigma_{i}u^{\alpha}\ot 1, \;\;\Psi (x\otimes \sigma_{i}u^{\alpha} )=\sigma_{i}u^{\alpha}\ot x, $$
$$\;\;\Psi(y\otimes \sigma_{i}u^{\alpha} )=(-1)^{\alpha}\sigma_{i}u^{\alpha}\ot y, \;\; \Psi (w\otimes \sigma_{i}u^{\alpha} )=(-1)^{\alpha}\sigma_{i}u^{\alpha}\ot w.$$

Therefore, the product table of $A\bowtie H_{4}$ is
\begin{center}
\begin{tabular}{|c|c|c|c|c|}
\hline  $\;$ & $\sigma_{j}u^{\beta}\ot 1$ & $\sigma_{j}u^{\beta}\ot x$ & $\sigma_{j}u^{\beta}\ot y$  & $\sigma_{j}u^{\beta}\ot w$\\
\hline  $ \sigma_{j}u^{\alpha}\ot 1$ &  $\sigma_{i}u^{\alpha}\bullet \;\sigma_{j}u^{\beta} \ot 1$ & $\sigma_{i}u^{\alpha}\bullet \;\sigma_{j}u^{\beta} \ot x$ & $\sigma_{i}u^{\alpha}\bullet \;\sigma_{j}u^{\beta} \ot y$ & $\sigma_{i}u^{\alpha}\bullet \;\sigma_{j}u^{\beta} \ot w$\\
\hline  $ \sigma_{j}u^{\alpha}\ot x$ &  $\sigma_{i}u^{\alpha}\bullet \;\sigma_{j}u^{\beta} \ot x$ & $\sigma_{i}u^{\alpha}\bullet \;\sigma_{j}u^{\beta} \ot 1$ & $\sigma_{i}u^{\alpha}\bullet \;\sigma_{j}u^{\beta} \ot w$ & $\sigma_{i}u^{\alpha}\bullet \;\sigma_{j}u^{\beta} \ot y$\\
\hline  $ \sigma_{j}u^{\alpha}\ot y$ &  $(-1)^{\beta}\sigma_{i}u^{\alpha}\bullet \;\sigma_{j}u^{\beta} \ot y$ & $(-1)^{\beta+1}\sigma_{i}u^{\alpha}\bullet \;\sigma_{j}u^{\beta} \ot w$ & $0$ & $0$\\
\hline  $ \sigma_{j}u^{\alpha}\ot w$ &  $(-1)^{\beta}\sigma_{i}u^{\alpha}\bullet \;\sigma_{j}u^{\beta} \ot w$ & $(-1)^{\beta+1}\sigma_{i}u^{\alpha}\bullet \;\sigma_{j}u^{\beta} \ot y$ & $0$ & $0$\\
\hline
\end{tabular}
\end{center}
$$ $$
and the antipode is given by:
$$\lambda_{A\bowtie H_{4}}(\sigma_{i}u^{\alpha}\ot 1)=\sigma_{i}^{(-1)^{\alpha +1}}u^{\alpha}\ot 1, \;\; \lambda_{A\bowtie H_{4}}(\sigma_{i}u^{\alpha}\ot x)=\sigma_{i}^{(-1)^{\alpha +1}}u^{\alpha}\ot x, $$
$$\lambda_{A\bowtie H_{4}}(\sigma_{i}u^{\alpha}\ot y)=(-1)^{\alpha}\sigma_{i}^{(-1)^{\alpha +1}}u^{\alpha}\ot w, \;\; \lambda_{A\bowtie H_{4}}(\sigma_{i}u^{\alpha}\ot x)=(-1)^{\alpha+1}\sigma_{i}^{(-1)^{\alpha +1}}u^{\alpha}\ot y.$$

Then, $A\bowtie H_{4}$ is an example of non-commutative, non-cocommutative Hopf quasigroup and its dual $(A\bowtie H_{4})^{\ast}=(H_{4})^{\ast}\infty A^{\ast}$ is an example of non-commutative, non-cocommutative Hopf coquasigroup.
}
\end{example}

\section*{Acknowledgements}
The  author was supported by  Ministerio de Ciencia e Innovaci\'on of Spain. Agencia Estatal de Investigaci\'on. Uni\'on Europea - Fondo Europeo de Desarrollo Regional (FEDER). Grant PID2020-115155GB-I00: Homolog\'{\i}a, homotop\'{\i}a e invariantes categ\'oricos en grupos y \'algebras no asociativas.

\end{document}